\newcounter{constK}
\renewcommand{\theconstK}{{{\kappa}_{\arabic{constK}}}}
\newcommand{\constK}{\refstepcounter{constK}\theconstK}
\newcounter{constb}
\renewcommand{\theconstb}{{{b}_{\arabic{constb}}}}
\newcommand{\constb}{\refstepcounter{constb}\theconstb}
\def\R{{\mathbb{R}}}
\theoremstyle{plain}
\newtheorem{thm}{Theorem}[section]
\newtheorem*{thm*}{Theorem}
\newtheorem{defn}[thm]{Definition}
\newtheorem{prop}[thm]{Proposition}
\newtheorem{cor}[thm]{Corollary}
\newtheorem{lem}[thm]{Lemma}
\newtheorem{remark}[thm]{Remark}
\title[Effective equidistribution of large dimensional measures]{Effective equidistribution of large dimensional measures on affine invariant submanifolds}
\author{Anthony Sanchez}
\address{Department of Mathematics, University of California San Diego\\ 9500 Gilman Dr, La Jolla,
CA 92093, USA}
\email{ans032@ucsd.edu}
\subjclass[2020]{Primary 28D10, 28D05; Secondary 37D40\\
\emph{Key words and phrases: Translation surfaces, effective equidistribution, unstable foliation.}}
\begin{document}

\maketitle
\begin{abstract}

The unstable foliation, that locally is given by changing horizontal components of period coordinates, plays an important role in study of translation surfaces, including their deformation theory and in the understanding of horocycle invariant measures.

In this article we show that measures of large dimension on the unstable foliation equidistribute in affine invariant submanifolds and give an effective rate. An analogous result in the setting of homogeneous dynamics is crucially used in the effective density and equidistribution results of Lindenstrauss-Mohammadi and Lindenstrauss--Mohammadi--Wang.
\end{abstract} 
\section{Introduction}

The recent advances of Lindenstrauss--Mohammadi \cite{MR4549089} and Lindenstrauss--Mohammadi--Wang  \cite{LMW1,LMW2} mark a significant step forward in the quantitative behavior of orbits in homogeneous dynamics. Their works provide a potential framework for establishing effective results (such as effective density) in other settings including Teichm\"{u}ller dynamics. Their argument for this consists of three steps (see Mohammadi \cite{ICM} for an overview). While further investigation is needed to understand how the first two steps would translate in the setting of Teichm\"{u}ller dynamics, the main contribution of this article is that their final step can be successfully adapted in this context. Establishing effective density in Teichm\"{u}ller dynamics could shed light on a conjecture of Forni \cite{MR4297073} concerning the equidistribution of expanding horocyles in the moduli space of translation surfaces.

Broadly speaking, their closing step allows one to conclude effective equidistribution of expanding translates of subsets of the unstable foliation even when the dimension of the subsets is not full. This allows Lindenstrauss--Mohammadi \cite{MR4549089} to conclude effective density of horocycle flow for $\mathbb H^3$ and $\mathbb H \times \mathbb H^2$.  Their closing step is greatly inspired by one of Venkatesh \cite{MR2680486} where the argument was used to prove sparse equidistribution of horocycles. Some aspects that complicate the analysis in this setting include that the unstable foliation is not globally defined and the degeneration of certain norms in the thin part of moduli space.

We let $\mathcal M$ denote an \emph{affine invariant submanifold}. These are properly immersed smooth sub-orbifolds in the space of translation surfaces that are locally defined by real homogeneous linear equations in period coordinates. By the breakthrough work of Eskin--Mirzakhani \cite{MR3814652} and Eskin--Mirzakhani--Mohammadi \cite{MR3418528}, these can be equivalently described as $\mathrm{GL}^+(2, \mathbb R)$-orbit closures of translation surfaces. Each affine invariant submanifold supports an ergodic $\mathrm{SL}(2,\mathbb R)$-invariant probability measure that we will denote by $\mu_\mathcal M$. 

We set up notation needed for our main theorem and refer to Section \ref{Preliminaries} for formal definitions. Given a unit area $x\in\mathcal M$ and $r>0$, we will consider certain neighborhoods $B_r(x)$ of $\mathcal M$ that we call a \emph{period box} and we call the largest $r=r(x)$ with this property the \emph{injectivity radius} of $x$. Let 
$$\mathcal M_\eta = \{x\in \mathcal M:r(x)\ge\eta\}.$$

Let $a_t$ denote the action of Teichm\"{u}ller geodesic flow and $u_s$ denote the horocycle flow on $\mathcal M$. There is a foliation on $\mathcal M$ inherited from the linear structure of period coordinates that we will call the \emph{unstable foliation}. We choose this name because this foliation acts as the unstable foliation with respect to $a_t$. This foliation first appeared in Veech \cite{MR866707} where it was used to prove ergodicity of geodesic flow. Furthermore, the (non-horocyclic parts of the) unstable foliation were an important ingredient in the work on measure classification results for the horocycle flow \cite{MR2599885, MR4493581,CBF} for certain classes of translation surfaces. The classification of  measures invariant under the unstable foliation in special cases were considered in Lindenstrauss--Mirzakhani \cite{MR2424174} and Smillie--Smillie--Weiss--Ygouf \cite{SSWY}. Additionally, the unstable foliation was used to effectively count simple closed curves of hyperbolic surfaces in Eskin-Mirzakhani-Mohammadi\cite{MR4422208}. In short, the unstable foliation has proved fruitful in the moduli space of translation surfaces.

We denote the leaf of $x$ under this foliation as $W^{\mathrm{u}}(x)$. Let $B_r ^{\mathrm{u}}(x)\subset \mathcal M$ denote the connected component of $x$ in $B_r(x)\cap W^{\mathrm{u}}(x)$. Notice that the horocycle orbit of a point $x$ is contained inside of the leaf $W^{\mathrm{u}}(x)$; we let $W^{\mathrm{u}}(x)\cap H^{(0)}(x)$ denote the remaining directions and $B_r ^{\mathrm{u,0}}(x)$ denote the connected component of $x$ in $B_r(x)\cap W^{\mathrm{u}}(x)\cap H^{(0)}(x)$ the ball of radius $r$ of unstable, non-horocyclic directions. Let $\mu_x ^{\mathrm{u}}$ denote the leafwise measures of $\mathcal M$ along $W^{\mathrm{u}}(x).$ 

We will consider certain measures on $H^{(0)}$ that are ``large".

\begin{defn}
    Let $\varepsilon>0$ and $0<\delta<1$. Let $\rho$ be a probability measure on $B^{\mathrm{u,0}} _r(x)$ where $x\in \mathcal M$ and $0<r<r(x)$. We say $\rho$ is \emph{$\varepsilon$-rich at scale $\delta$} if for all $y\in B^{\mathrm{u,0}} _r(x)$, we have
    $$\rho(B_{\delta} ^{\mathrm{u,0}}(y))<\ref{b:rich}\delta^{d-\varepsilon}$$
    where $d$ is the dimension of $\mathcal M\cap W^{\mathrm{u}}(x)\cap H^{(0)}(x)$ and $\constb\label{b:rich}= \ref{b:rich}(\varepsilon,\delta)>0$.
\end{defn}

Our main result shows that we can replace the natural measure on unstable, non-horocylic directions with measures that are ``large" in the sense of the above definition and conclude effective equidistribution of expanding translates.

\begin{thm}\label{main}
 Let $\mathcal M$ be an affine invariant submanifold and suppose we had some period box $B_r(x)\subset \mathcal M$, where $x\in \mathcal M_\eta$ and $0<r<r(x)$, that supports a measure $\rho$ which is $\varepsilon$-rich at scale $\delta$. There exists a rate  $\constK\label{K:main}=\ref{K:main}(\mathcal M)$, a threshold $\varepsilon_0=\varepsilon_0(\kappa_1)>0$, and constants $L=L(\mathcal M)$ and $\constb\label{b:main}=\ref{b:main}(\mathcal M)$ such that if $\varepsilon<\varepsilon_0$, then for any $f\in C_c^\infty(\mathcal M)$ we have 
$$\left|\int_{B^{\mathrm{u,0}} _r(x)}\int_{s=0} ^1 f(a_tu_s y)\,ds\,d\rho(y)-\int_\mathcal M f \,d\mu_{\mathcal M}\right|<\ref{b:main} \mathcal C(f)\eta^{-L}e^{-\ref{K:main} t},$$
for $\frac{|\log(\delta)|}{8}\le t \le \frac{|\log(\delta)|}{4}$ and $0<\eta=\eta(t)\ll 1$. Here $\mathcal C$ is a norm on the set of compactly supported smooth functions of $\mathcal M$.

\end{thm}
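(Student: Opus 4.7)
The plan is to adapt the closing step of Venkatesh \cite{MR2680486} and Lindenstrauss--Mohammadi \cite{MR4549089} to the Teichm\"{u}ller setting. The overarching strategy is to \emph{thicken} the rich leafwise measure $\rho$ into a measure on $\mathcal M$ that is absolutely continuous with respect to $\mu_\mathcal M$ and whose density has Sobolev norm controlled by the richness parameter, and then invoke effective exponential mixing of $a_t$ on $(\mathcal M,\mu_\mathcal M)$.

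First I would form the measure $\nu=\rho\otimes ds$ on the full unstable ball $B_r^{\mathrm{u}}(x)$, where $ds$ is Lebesgue on the horocycle interval $[0,1]$, and then convolve $\nu$ with a smooth bump $\phi_\sigma$ supported in a $\sigma$-neighborhood of $x$ in the directions transverse to $W^{\mathrm{u}}(x)$. This produces a measure $\tilde\nu_\sigma$ on $\mathcal M$ with a density $h_\sigma$ against $\mu_\mathcal M$, where $\sigma$ will be chosen as a small power of $\delta$. Since $f$ is smooth and the thickening moves points by at most $\sigma$ in the transverse direction, we get
$$\left|\int_{B_r^{\mathrm{u,0}}(x)}\int_0^1 f(a_t u_s y)\,ds\,d\rho(y)\;-\;\int f(a_t z)\,d\tilde\nu_\sigma(z)\right| \ll \sigma\,\mathcal C(f).$$
Next, effective exponential mixing of $a_t$ with respect to $\mu_\mathcal M$ (Avila--Gou\"ezel--Yoccoz and subsequent refinements for affine invariant submanifolds) gives
$$\left|\int h_\sigma(z)\,f(a_t z)\,d\mu_\mathcal M(z) - \int f\,d\mu_\mathcal M\int h_\sigma\,d\mu_\mathcal M\right| \ll \mathcal S(h_\sigma)\,\mathcal S(f)\,e^{-\kappa_0 t},$$
where $\mathcal S$ denotes an appropriate Sobolev norm. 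Since $\int h_\sigma\,d\mu_\mathcal M\approx 1$ (up to $O(\sigma)$), the problem reduces to bounding $\mathcal S(h_\sigma)$.

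The crux of the argument, and where the $\varepsilon$-richness is used, is the Sobolev estimate for $h_\sigma$. By a standard covering/partition argument, $\mathcal S(h_\sigma)$ is controlled by the maximal $\nu$-mass of $\sigma$-balls in $\mathcal M$ (times negative powers of $\sigma$ from differentiating the bump). Because $\nu=\rho\otimes ds$ is already smooth in the horocyclic direction, the only remaining concentration comes from $\rho$; the $\varepsilon$-richness bound $\rho(B_\delta^{\mathrm{u,0}}(y))\le b_1\,\delta^{d-\varepsilon}$ then yields $\mathcal S(h_\sigma)\lesssim \eta^{-L}\,\sigma^{-C\varepsilon}$ for constants $C,L$ depending on $\mathcal M$, with the factor $\eta^{-L}$ absorbing the degeneration of the transverse Sobolev norms in the thin part. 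Balancing the two errors by taking $\sigma$ a suitable small power of $e^{-t}$ (equivalently, of $\delta$, given the hypothesis $|\log\delta|/8\le t\le|\log\delta|/4$) yields total error $\ll b_2\,\mathcal C(f)\,\eta^{-L}\,e^{-\kappa_1 t}$, provided $\varepsilon<\varepsilon_0$ is small enough that $\kappa_0/(1+C\varepsilon)$ exceeds the target rate $\kappa_1$.

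The main obstacles I anticipate are: (i) the unstable foliation is only defined \emph{locally} in period coordinates, so the thickening and convolution must be set up inside the period box $B_r(x)$ and one has to use the injectivity radius assumption $x\in\mathcal M_\eta$ to ensure the thickened measure genuinely lives on $\mathcal M$; (ii) identifying the correct complement of $W^{\mathrm{u}}$ along which to thicken (stable plus central $a_t$-directions) inside the $\mathrm{GL}^+(2,\mathbb R)$-local product structure of $\mathcal M$, and verifying that the leafwise measures $\mu_x^{\mathrm{u}}$ truly disintegrate $\mu_\mathcal M$ in this product form; and (iii) extracting an effective mixing statement in the Teichm\"{u}ller setting that is compatible with Sobolev norms allowed to degenerate like $\eta^{-L}$, which is what forces the appearance of the factor $\eta^{-L}$ and the norm $\mathcal C(f)$ in the final bound.
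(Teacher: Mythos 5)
There is a genuine gap in your proposal, and it is exactly the gap that the Venkatesh--Lindenstrauss--Mohammadi argument is designed to circumvent. You propose to thicken $\nu=\rho\otimes ds$ into a density $h_\sigma$ and then apply exponential mixing of $a_t$ directly to the pair $(h_\sigma,f)$, claiming $\mathcal S(h_\sigma)\lesssim\eta^{-L}\sigma^{-C\varepsilon}$. This bound is not achievable. First, to obtain a genuine density against $\mu_\mathcal M$ you must smooth $\rho$ in the $d$ non-horocyclic unstable directions as well (not merely transverse to $W^{\mathrm{u}}$), since $\rho$ need not be absolutely continuous there. Once you do, the $\varepsilon$-richness controls only the \emph{sup} of $h_\sigma$ (at scale $\sigma\ge\delta$ one gets $\|h_\sigma\|_\infty\ll\delta^{-\varepsilon}\sigma^{-\mathrm{codim}}$), but any $C^1$ or Sobolev norm compatible with the AGY mixing statement necessarily picks up an extra factor $\sigma^{-1}$ from differentiating the bump, and further negative powers of $\sigma$ from the transverse bump. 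Thus $\mathcal S(h_\sigma)\gtrsim\sigma^{-m}\delta^{-\varepsilon}$ for a fixed integer $m\ge 1$ depending on $\dim\mathcal M$. Balancing requires simultaneously $\sigma e^{2t}\mathcal C(f)\ll e^{-\kappa_1 t}\mathcal C(f)$ (the thickening in an expanded direction costs $e^{2t}$), $\sigma\ge\delta\ge e^{-8t}$, and $\sigma^{-m}\delta^{-\varepsilon}e^{-\kappa_0 t}\ll e^{-\kappa_1 t}$, i.e.\ $\sigma\gg e^{-(\kappa_0-\kappa_1)t/m}$. Since the mixing rate $\kappa_0$ from Avila--Gou\"ezel--Yoccoz is a small fixed constant while $m$ and the ambient dimensions are fixed but typically large, the interval of admissible $\sigma$ is empty: the Sobolev loss from the bump swamps the mixing gain, and no choice of $\sigma$ and $\varepsilon_0$ closes the argument.

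The paper avoids this trap precisely by \emph{not} feeding the thickened density into a mixing inequality. Instead it (i) takes an extra horocycle average of length $\tau=e^{(1/l-2)t}$, (ii) applies Cauchy--Schwarz with the thickening function $\varphi$ as a weight so that only $\|\varphi\|_\infty\ll\eta^{-nd}\delta^{-\varepsilon}$ enters (no derivatives of $\varphi$ are ever taken), (iii) applies effective equidistribution of the unstable leaf (Proposition \ref{Prop: EffUnstable}) to the smooth test function $\hat f_{r_1,r_2}(y)=f(a_tu_{r_1}a_{-t}y)f(a_tu_{r_2}a_{-t}y)$, whose $C^1$ norm is controlled because $e^{2t}\tau=e^{t/l}$ is a small power of $e^t$, and (iv) handles the resulting space integral $\int_\mathcal M\hat f_{r_1,r_2}\,d\mu_\mathcal M$ by decay of correlations of $u_s$ (Corollary \ref{Cor:Decay of Cor}), splitting $[0,\tau]^2$ into a near-diagonal region of small measure and a far-diagonal region where $u_{e^{2t}(r_1-r_2)}$ is large. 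You should incorporate the $L^2$-averaging and Cauchy--Schwarz step: it is the essential device that converts the problematic Sobolev norm of the thickened density into an $L^\infty$ bound that $\varepsilon$-richness actually controls. Your instinct about the thickening, the role of $\eta^{-L}$ in the thin part, and the range $|\log\delta|/8\le t\le|\log\delta|/4$ is otherwise sound.
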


Throughout, we use the notation $A \ll B$
 to mean that $A \le CB$ for some constant $C > 0$. 
\begin{remark}
The rate $\ref{K:main}$ in Theorem \ref{main} ultimately comes exponential mixing of Teichm\"{u}ller geodesic flow  of Avila-Gou\"{e}zel-Yoccoz \cite{MR2264836}.
\end{remark}

We shall see that despite Theorem \ref{main} giving effective equidistribution of measures of expanding translates of the form $ds\,d\rho$, we still need to use an effective equidistribution result of the leafwise measure $\mu ^{\mathrm{u}} _x$ on $W^{\mathrm{u}}(x)$. This is done in Section \ref{unstable} in Proposition \ref{Prop: EffUnstable}. A version of this is proved in Eskin-Mirzakhani-Mohammadi \cite{MR4422208}, but ours takes into account the cusp of moduli space. Additionally, the dependence of the cusp $\eta$ on time $t$ comes from the proof of Proposition \ref{Prop: EffUnstable}.

 \subsection*{Acknowledgements}
The author would like to thank Amir Mohammadi for many enlightening conversations. This work was supported by the National Science Foundation Postdoctoral Fellowship under grant number DMS-2103136.

\section{Preliminaries}\label{Preliminaries}
We review the basics of translation surfaces needed for this
article. For a detailed treatment of these topics, we refer the reader to the wonderful survey by Zorich \cite{MR2261104}. 
\subsection{On translation surfaces}
A \emph{translation surface} is a pair $x=(M,\omega)$ where $M$ is a compact, connected Riemann surface of genus $g$ and $\omega$ is a non-zero holomorphic 1-form on $M$. We can also, equivalently, view a translation surface as a union of finitely many polygons $P_1\cup P_2\cup\cdots\cup P_n$ in the Euclidean plane with gluings of parallel sides by translations such that for each edge there exists a parallel edge
of the same length and these pairs are glued together by a Euclidean translation. 

We denote the zeros of $\omega$ by $\Sigma$. By the Riemann-Roch theorem the sum of order of the zeros is $2g-2$ where $g$ denotes the genus of $M$. Thus, the space of genus $g$ translation surfaces can be stratified by integer partitions of $2g-2$. If $ \alpha = (\alpha_1,\ldots,\alpha_{|\Sigma|})$ is an integer partition of $2g-2$, we denote by $\mathcal H(\alpha)$ the moduli space of translation surfaces $\omega$ such that the multiplicities of the zeroes are given by $\alpha_1,\ldots,\alpha_{|\Sigma|}.$ 

There is a natural action by $\mathrm{SL}_2(\R)$ on the space of translation surfaces. This is most easily seen via the polygon definition: Given a translation surface $(M,\omega)$ that is a finite union of polygons $\{P_1,\ldots,P_n\}$ and $A\in \mathrm{SL}_2(\R)$, we define $A\cdot (M,\omega)$ to be the translation surface obtained by the union of the polygons  $\{AP_1,\ldots,AP_n\}$ with the same side gluings as for $\omega$. We are particularly interested in the Teichm\"{u}ller geodesic flow given by the action of
$$a_t=\begin{pmatrix}
    e^t&  0\\
    0 & e^{-t}
\end{pmatrix},$$
and the horocycle flow given by the action of
$$u_s=\begin{pmatrix}
    1&  s\\
    0 & 1
    \end{pmatrix}.$$

 Let  $\mathcal T \mathcal H (\alpha)$ denote the space of marked translation surfaces and $\pi:\mathcal T \mathcal H(\alpha)\to \mathcal H(\alpha)$ be the forgetful map that forgets the marking. The \emph{period map} $\Phi: \mathcal T \mathcal H(\alpha)\to H^1(M,\Sigma,\mathbb C)$ given by integrating over a basis of relative homology of a translation surface $x=(M,\omega)$ provides local coordinates for the space of marked translation surfaces. That is, given $2g+|\Sigma|-1$ curves $\gamma_1,\ldots,\gamma_{2g+|\Sigma|-1}$ that form a basis for $H_1(M, \Sigma,\mathbb Z)$, the period map is defined by
$$\Phi(x)=\left(\int_{\gamma_i}\omega\right)_{i=1} ^{2g+|\Sigma|-1}.$$
Consequently, period coordinates provide local coordinates for the space of translation surfaces $\mathcal H(\alpha)$.  Due to the splitting $$H^1(M, \Sigma,\mathbb C)=H^1(M, \Sigma,\mathbb R)\oplus H^1(M, \Sigma,\mathbb R),$$
we often write $\Phi(x)=a+ib$ for $a,b\in H^1(M, \Sigma,\mathbb R).$

For $x=(M,\omega)\in \mathcal H(\alpha)$, we define the \emph{tautological plane}, denoted by $E(x)$, to be the two dimensional subspace of $H^1(M,\Sigma, \mathbb R)$ spanned by the real and imaginary parts of $\omega$. Notice that the natural projection $p:H^1(M,\Sigma, \mathbb R)\to H^1(M, \mathbb R)$ defines an isomorphism between $E(x)$ and $p(E(x))\subset H^1(M,\mathbb R)$. Let $E(x)_\mathbb C \subset H^1(M,\Sigma,\mathbb C)$ denote the complexification of $E(x)$ and note that it is $\mathrm{SL}(2, \mathbb R)$-equivariant. Denote the symplectic compliment by 
$$H^{(0)}_\mathbb C(x)=\{c\in H^1(M,\Sigma,\mathbb C): p(c)\wedge p(E(x)_\mathbb C)\}.$$
Similarly, we define $H^{(0)}_\mathbb R(x)$ in a similarly manner and will use the simpler notation of $H^{(0)}(x)$. The superscript comes from the fact that, at the level of absolute homology, this subspace corresponds to cycles that have zero holonomy. 

We will need the non-divergence results of Athreya \cite{MR2247652}. See also Section 2.8 of Eskin-Mirzakhani-Mohammadi\cite{MR4422208}.

\begin{thm}
    There exists a compact subset $\mathcal M_0\subset \mathcal M$ and some $T_0=T_0(x)>0$ with the following property. For every $t_0$ and every $x\in \mathcal M$, there exists $s\in[0,1/2]$ and $t_0\le t\le T_0 $ such that $a_tu_s x\in \mathcal M_0$.
\end{thm}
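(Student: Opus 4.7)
The plan is to prove this via the Margulis function (Dani--Margulis) method, which is the standard route to quantitative non-divergence in $\mathcal M$ and is exactly how Athreya treats the result. The core idea is to construct a proper function on $\mathcal M$ that quantifies how deep a point lies in the cusp, and then to show that horocycle averages of this function are strictly contracted by $a_\tau$ for large $\tau$.

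First, I would build a proper function $f:\mathcal M\to[1,\infty)$ from short saddle connections, something like $f(x)=\sum_{\gamma}\ell_x(\gamma)^{-\alpha}$, where the sum ranges over saddle connections on $x$ of length at most a fixed threshold $\ell_0$ and $\alpha>0$ is a small parameter chosen at the end. Properness on $\mathcal M$ follows from Mumford compactness: level sets $\{f\le C\}$ force a lower bound on the systole and so sit inside a compact subset of $\mathcal M$. One has to verify the relevant properties survive passage to the affine invariant submanifold $\mathcal M$, but this is standard since the defining equations are linear in period coordinates and so respect the $\mathrm{SL}(2,\mathbb R)$-action.

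The heart of the argument is the contraction inequality: there exist $\tau_0>0$, $c<1$, and $B>0$ such that for all $\tau\ge\tau_0$ and all $x\in\mathcal M$,
$$\int_0^{1/2} f(a_\tau u_s x)\,ds \;\le\; c\,f(x) + B.$$
To prove this one studies how the length of a single saddle connection $\gamma$ evolves along $s\mapsto a_\tau u_s x$: writing the period as $a+ib$, it becomes $(e^\tau(a+sb),e^{-\tau}b)$, so for all but a small $s$-window of length $\lesssim e^{-\tau}/|b|$ the connection is elongated. Integrating $\ell^{-\alpha}$ over $s\in[0,1/2]$ therefore picks up a factor that is $\ll e^{-\alpha\tau}$ away from the bad window, and using that short saddle connections lie in a controlled number of parallel families (so that only finitely many $\gamma$ can be short simultaneously), one sums to recover the inequality with $c = c(\alpha,\tau)<1$ once $\tau$ is large.

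Finally I would iterate: given $x$, set $x_0=x$ and apply the inequality inductively to get $\int f(a_{N\tau}u_{s_N}\cdots u_{s_1}x)\,d\mathbf s \le c^N f(x) + B/(1-c)$. A Chebyshev argument on the $(s_1,\dots,s_N)$-average, together with the Cartan-like identity $a_\tau u_s a_{-\tau}=u_{e^{2\tau}s}$ absorbing composite unipotents into a single $u_s$, produces a single $s\in[0,1/2]$ and a single $t\in[t_0,T_0]$ with $f(a_tu_sx)\le 2B/(1-c)$. Taking $\mathcal M_0=\{f\le 2B/(1-c)\}$ and $N\asymp \log f(x)$ gives $T_0=T_0(x)$ as required.

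The main obstacle is the contraction inequality: one must rule out that many saddle connections (in many independent parallel classes) become simultaneously short after $a_\tau u_s$, and one must keep the constants uniform over $\mathcal M$, including over the boundary strata of the ambient stratum touched by the closure of $\mathcal M$. The combinatorial counting of short saddle connections and the nondegeneracy of the horocycle deformation on the tautological plane are what make $\alpha$ small but positive work.
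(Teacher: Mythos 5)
The paper does not prove this statement; it is imported from Athreya \cite{MR2247652} (with a pointer to Section 2.8 of \cite{MR4422208}), so there is no in-paper proof to compare against. Your outline is indeed the approach of those sources: a proper Margulis-type function built from inverse powers of saddle-connection lengths, an averaged contraction inequality along $a_\tau u_s$, and an iteration/Chebyshev step, all of which appear in Athreya's argument and the Eskin--Masur nondivergence machinery it draws on.

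Two places where the sketch is thinner than what a complete write-up needs. First, in the iteration the composite $(a_\tau u_{s_N})\cdots(a_\tau u_{s_1})$ collapses to $a_{N\tau}u_s$ with $s=\sum_{j\ge 1} e^{-2(j-1)\tau}s_j$, which for $s_j\in[0,1/2]$ lies in $[0,\tfrac{1}{2(1-e^{-2\tau})})$ and therefore slightly overshoots $[0,1/2]$; you need to run the contraction inequality over a shrunken $s$-interval (or rescale at the end) to land in the stated range. Second, the genuinely hard point is not that a single saddle connection is elongated by $a_\tau u_s$ off a short bad $s$-window, but that transverse saddle connections cannot all have their bad windows align simultaneously; this is exactly where the Eskin--Masur ``complex'' structure (equivalently, Athreya's control on simultaneously short saddle connections) enters, and it is what makes the constant $c<1$ uniform. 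You correctly flag this as ``the main obstacle,'' but it is the heart of the theorem rather than a peripheral technicality, so the proposal as written is an accurate roadmap rather than a proof.
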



\subsection{The AGY-norm and period boxes}
We will utilize the norm defined in Avila-Gou\"{e}zel-Yoccoz \cite{MR2264836}. For $x=(M,\omega)\in \mathcal M$ and any $c\in H^1(M,\Sigma,\mathbb C)$, we define
$$\|c\|_{\mathrm{AGY},x}=\sup_\gamma\frac{|c(\gamma)|}{|\int_\gamma \omega|}$$
where $\gamma$ is a saddle connection of $x$. It was shown in Avila-Gou\"{e}zel-Yoccoz \cite{MR2264836} that this defines a norm and the corresponding Finsler metric is complete. 

We will often use the following lemma that allows us to compare the norm at points on that differ by certain elements of $\mathrm{SL}_2(\mathbb R)$. See also Lemma 2.4 of Eskin-Mirzakhani-Mohammadi\cite{MR4422208}.

\begin{lem}[Lemma 5.2, \cite{MR2264836}]\label{lemmaAGY}
    For $c\in H^1(M,\Sigma,\mathbb C)$, $t\ge0$, and $s\in [0,1]$ we have
    $$e^{-2-2t}\|c\|_{\mathrm{AGY},x}\le\|(a_tu_s)_* c\|_{\mathrm{AGY, } a_tu_sx}\le e^{2+2t}\|c\|_{\mathrm{AGY},x}.$$
\end{lem}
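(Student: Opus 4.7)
The plan is to reduce the inequality to a pointwise estimate on individual saddle connections and then to a simple operator-norm bound for $a_tu_s$ acting on $\mathbb{C}\cong\mathbb{R}^2$.

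First, I would observe that the $\mathrm{SL}(2,\mathbb{R})$-action preserves the underlying topological surface, so the saddle connections of $x$ and those of $a_tu_sx$ are in natural bijection, and both suprema defining the AGY norms may be indexed by the same set. By the definition of the $\mathrm{SL}(2,\mathbb{R})$-action on $H^1(M,\Sigma,\mathbb{C})$ (acting in the $\mathbb{R}^2=\mathbb{C}$ target of the period map, as described after the splitting $H^1(M,\Sigma,\mathbb{C})=H^1(M,\Sigma,\mathbb{R})\oplus H^1(M,\Sigma,\mathbb{R})$), one has for each saddle connection $\gamma$ the identity $((a_tu_s)_*c)(\gamma)=(a_tu_s)\cdot c(\gamma)$, and the same identity with $\omega$ in place of $c$. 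Thus the ratio appearing in the AGY norm at $a_tu_sx$ equals
$$\frac{|(a_tu_s)\cdot c(\gamma)|}{|(a_tu_s)\cdot\int_\gamma\omega|}.$$

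Second, writing $v=c(\gamma)$ and $w=\int_\gamma\omega$, I would compare this with $|v|/|w|$ via the elementary bound
$$\frac{|(a_tu_s)v|}{|(a_tu_s)w|}\le\|a_tu_s\|_{\mathrm{op}}\cdot\|(a_tu_s)^{-1}\|_{\mathrm{op}}\cdot\frac{|v|}{|w|}.$$
A direct computation using that $a_tu_s$ has the matrix $\bigl(\begin{smallmatrix}e^t & e^ts\\0 & e^{-t}\end{smallmatrix}\bigr)$, and the analogous expression for its inverse, shows that for $t\ge 0$ and $s\in[0,1]$ both operator norms are bounded by $e^{1+t}$; concretely, $\|u_s\|_{\mathrm{op}}$ is at most the golden ratio on $s\in[0,1]$, so $\|a_tu_s\|_{\mathrm{op}}\le e^{1+t}$, and then $\|(a_tu_s)^{-1}\|_{\mathrm{op}}\le e^{1+t}$ follows because the two singular values of $a_tu_s$ are reciprocals. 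Multiplying these two bounds and taking the supremum over $\gamma$ yields the upper estimate in the lemma; the lower estimate follows by symmetry, applying the same argument to $(a_tu_s)^{-1}$.

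There is essentially no obstacle here, as the statement is elementary. The only point requiring care is that the numerator and denominator of each AGY ratio transform by the \emph{same} matrix on $\mathbb{C}\cong\mathbb{R}^2$, so only the condition number $\|a_tu_s\|_{\mathrm{op}}\|(a_tu_s)^{-1}\|_{\mathrm{op}}$, and not the overall size of $a_tu_s$, controls the comparison — which is precisely what produces the symmetric bound $e^{\pm(2+2t)}$ rather than something like $e^{\pm 4t}$.
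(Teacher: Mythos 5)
Your proof is correct, and since the paper simply cites this statement from Avila--Gou\"{e}zel--Yoccoz rather than re-proving it, there is no in-paper argument to compare against; what you have written is the standard and essentially the only natural proof. The key points are all present and justified: the natural bijection of saddle connections under the $\mathrm{GL}^+(2,\mathbb{R})$-action (a linear map of the plane sends straight singularity-to-singularity segments without interior singularities to the same kind of segments), the fact that both the period $\int_\gamma\omega$ and the cohomology class $c$ transform in the $\mathbb{C}\cong\mathbb{R}^2$ target by the same matrix $a_tu_s$, and the reduction to a condition-number bound. Your numerics also check out: $\|a_tu_s\|_{\mathrm{op}}\le\|a_t\|_{\mathrm{op}}\|u_s\|_{\mathrm{op}}\le e^t\cdot\tfrac{1+\sqrt5}{2}<e^{1+t}$, and since $\det(a_tu_s)=1$ the two singular values are reciprocal, giving $\|(a_tu_s)^{-1}\|_{\mathrm{op}}=\|a_tu_s\|_{\mathrm{op}}\le e^{1+t}$ and hence a condition number bounded by $e^{2+2t}$. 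Your closing observation is exactly the right conceptual point: the AGY norm is a ratio, so only the condition number of $a_tu_s$ enters, which is why the bound is $e^{\pm(2+2t)}$ and not something of order $e^{\pm 4t}$.
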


For $r>0$ and $ x \in \mathcal {TH}(\alpha)$, we define 

$$R_r(x):=\{\Phi(x) + a' +ib':a',b'\in H^1(M,\Sigma,\mathbb R),\|a'+ib'\|_{\mathrm{AGY},x}\le r\}.$$

By non-divergence results of the horocycle flow $u_t$  and the construction of a Margulis function on the space of translation surfaces (See Lemma 2.6 of Eskin-Mirzakhani-Mohammadi\cite{MR4422208} for details), for any $x\in \mathcal H(\alpha)$ and any lift $\tilde x\in \mathcal {TH}(\alpha)$, there exists $r(x)$ such that for all $0<r<r(x)$, the restriction of the covering map $\pi$ to $$B_r(\tilde x):= \Phi^{-1}(R_r(\tilde x))$$
is injective. 

In this case (i.e. for $0<r\le r(x)$), we call $B_r(x) = \pi(B_r(\tilde x))$ a \emph{period box} of radius $r$ centered at $x$ and  we call $r(x)$ the \emph{injectivity radius} of $x$. Additionally, when we want to work with $x$ in an affine invariant manifold $\mathcal M$, we consider $B_r ^\mathcal M(x) := B_r(x)\cap \mathcal M$  and continue to call it the period box of radius $r$ centered at $x.$ In fact, if we write ``$x\in\mathcal M$" we will only work with $B_r ^\mathcal M(x)$ and often suppress the superscript.

For convenience, we will also work with the following max norm,
$$\|c\|_{\text{max},x} = \max_i |\lambda_i|\|c_i\|_{\mathrm{AGY},x}$$
where $c$ is written as a linear combination $\sum_i \lambda_i c_i$ of any basis $c_i$ of $H^1(M,\Sigma;\mathbb C)$. Later we will allow the parameter $\eta$ in $\mathcal M_\eta$ to depend on the time $t$ and so we show that the max norm and the AGY norm are comparable on $\mathcal M_\eta$.

\begin{lem}\label{lemmamax}
        For $c\in H^1(M,\Sigma,\mathbb C)$, and $x\in \mathcal M_\eta$, there exists $n>0$ such that
    $$\eta^n\|c\|_{\mathrm{max},x}\le\| c\|_{\mathrm{AGY, } x}\le (2g+|\Sigma|-1)\|c\|_{\mathrm{max},x}$$
    where the implicit constant is absolute.
\end{lem}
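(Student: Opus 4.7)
\emph{Proof plan.} The upper bound is just the triangle inequality for the AGY norm: writing $c=\sum_{i=1}^N\lambda_i c_i$ with $N=2g+|\Sigma|-1$, subadditivity gives $\|c\|_{\mathrm{AGY},x}\le\sum_i|\lambda_i|\|c_i\|_{\mathrm{AGY},x}\le N\|c\|_{\mathrm{max},x}$.

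For the lower bound my strategy is to reduce to a uniform norm comparison on the compact part $\mathcal M_0$ by combining Lemma \ref{lemmaAGY} with the non-divergence theorem. First I would apply the non-divergence theorem above to obtain $s\in[0,1/2]$ and $t\ge 0$ with $y:=a_tu_sx\in\mathcal M_0$, together with the quantitative return-time estimate $t=O(\log(1/\eta))$. On the compact set $\mathcal M_0$, any two continuous norms on the finite-dimensional space $H^1(M,\Sigma,\mathbb C)$ are uniformly equivalent, so in particular the AGY norm $\|\cdot\|_{\mathrm{AGY},y}$ and a fixed reference norm $\|\cdot\|_{\mathrm{ref}}$ on $H^1(M,\Sigma,\mathbb C)$ satisfy $\|c'\|_{\mathrm{ref}}\asymp\|c'\|_{\mathrm{AGY},y}$ uniformly in $y\in\mathcal M_0$.

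Pulling this equivalence back through $(a_tu_s)_*$ using Lemma \ref{lemmaAGY} costs a factor of $e^{O(t)}=\eta^{-O(1)}$, while the linear action of $(a_tu_s)_*$ on coordinates in the basis $\{c_i\}$ has operator norm $O(e^t)$ (since it is polynomial in $e^{\pm t}$ and $s$ applied to the real and imaginary parts of $c$), producing another factor of $\eta^{-O(1)}$ when passing from the coordinates of $(a_tu_s)_*c$ at $y$ to those of $c$ at $x$. Combining these yields $\|c\|_{\mathrm{ref}}\ll\eta^{-O(1)}\|c\|_{\mathrm{AGY},x}$ for every $c\in H^1(M,\Sigma,\mathbb C)$. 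Applying this estimate first to $c$ itself and then individually to each basis vector $c_i$ gives both $\max_i|\lambda_i|\ll\eta^{-O(1)}\|c\|_{\mathrm{AGY},x}$ and $\max_i\|c_i\|_{\mathrm{AGY},x}\ll\eta^{-O(1)}$; taking the product of these two bounds controls the max norm $\|c\|_{\mathrm{max},x}=\max_i|\lambda_i|\|c_i\|_{\mathrm{AGY},x}$ by $\eta^{-n}\|c\|_{\mathrm{AGY},x}$ for some concrete exponent $n$.

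The main obstacle I anticipate is making the return-time bound $t=O(\log(1/\eta))$ rigorous, since Athreya's theorem as cited only yields some $T_0=T_0(x)$ without explicit quantification. Extracting the correct polynomial dependence on the injectivity radius defining $\mathcal M_\eta$ requires revisiting the Margulis-function estimates underlying the non-divergence result; once this quantitative estimate is available, the remainder of the argument is a mechanical combination of Lemma \ref{lemmaAGY} with uniform norm equivalence on the compact set $\mathcal M_0$.
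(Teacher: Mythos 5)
Your proposal is correct and follows essentially the same route as the paper's proof: the upper bound is the triangle inequality, and the lower bound passes to the compact set $\mathcal M_0$ via non-divergence, exploits uniform norm equivalence there, and pulls back through $(a_tu_s)_*$ via Lemma \ref{lemmaAGY}. The paper argues slightly more directly by comparing the AGY and max norms on $\mathcal M_0$ without an intermediate reference norm, applying Lemma \ref{lemmaAGY} once to $\|\cdot\|_{\mathrm{AGY}}$ and once to each $\|c_i\|_{\mathrm{AGY}}$ appearing in the max norm; your detour through $\|\cdot\|_{\mathrm{ref}}$ and the operator norm of $(a_tu_s)_*$ reaches the same conclusion with a bit more bookkeeping. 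Your concern about quantifying the return time $t'=O(\log(1/\eta))$ is well placed: the paper sets $n\ge(4t'+4-\log\ell)/bt$ with $\eta=e^{-bt}$ and also leaves this bound implicit, so the exponent $n$ is only uniform over $\mathcal M_\eta$ once one supplies the standard Margulis-function estimate that for $x\in\mathcal M_\eta$ the time to enter $\mathcal M_0$ is $\ll\log(1/\eta)$.
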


\begin{proof}
    Let $c=\sum_i \lambda_i c_i\in H^1(M,\Sigma,\mathbb C)$ and $x\in \mathcal M_\eta$ where $\eta = e^{-tb}$ for some $b\in(0,1)$.

    By definition of $\|\cdot\|_{\textrm{max},x}$, we have
    $$\|c\|_{\textrm{AGY},x}\le (2g+|\Sigma|-1)\|c\|_{\textrm{max},x}.$$

    Additionally, there exists some absolute constant $\ell$ such that
    $$\|c\|_{\textrm{AGY},x'}\ge \ell \|c\|_{\textrm{max},x'}$$
    for any $x'\in\mathcal M_0$ for the fixed compact set $\mathcal M_0$ from the statement of non-divergence. Furthermore, by non-divergence, there exists $s\in[0,1/2]$ and $t\le t'$ such that $x':=a_{t'}u_sx \in \mathcal M_0$. Then by using Lemma \ref{lemmaAGY} twice we deduce,     $$e^{2t'+2}\|c\|_{\textrm{AGY},x}\ge\|c\|_{\textrm{AGY},x'}\ge \ell \|c\|_{\textrm{max},x'}\ge \ell e^{-2t'-2} \|c\|_{\textrm{max},x}$$
    for any $x\in \mathcal M_\eta.$
    Thus, $$\|c\|_{\textrm{AGY},x}\ge \ell e^{-4t'-4} \|c\|_{\textrm{max},x}.$$
    So by choosing $n\ge(4t' + 4 - \log (\ell))/bt$, we get 
    $$\|c\|_{\textrm{AGY},x}\ge e^{-nbt} \|c\|_{\textrm{max},x}=\eta^n \|c\|_{\textrm{max},x}.$$
\end{proof}

In particular, balls with respect to AGY and the max norm are comparable,
$$B_{\eta^n}\subset B_r ^{\textrm{max}}\subset B_{(2g+|\Sigma|-1)r}.$$
This allows us to compare measures of one type of ball with measures of the other. The multiplicative constant that appears in the lower bound when considering norms on $\mathcal M_\eta$ requires very careful analysis because it changes as $t$ changes.
\subsection{The unstable foliation}
We follow Avila-Gou\"{e}zel \cite{MR3071503} section 4. Given a point $x=(M,\omega)\in\mathcal M$, the tangent space $T_x\mathcal M\simeq H^1(M,\Sigma,\mathbb C)$ decomposes as
$$T_x\mathcal M =\mathbb R\textbf{v(x)}\oplus E^{\mathrm{u}} (x)\oplus E^s(x)$$
where $\textbf{v}(x)$ determines the direction of Teichm\"{u}ller geodesic flow and has $\|\textbf{v}(x)\|_{\mathrm{AGY},x}=1$, and
$$E^{u}(x)=T_x\mathcal M \cap D\Phi_x^{-1}(H^1(M,\Sigma,\mathbb R)),$$
$$E^{s}(x)=T_x\mathcal M \cap D\Phi_x^{-1}(\textbf{i}H^1(M,\Sigma,\mathbb R)).$$
We call $E^{\mathrm{u}}(x)$ (respectively, $E^{\mathrm{s}}(x)$) the \emph{unstable} (respectively, the \emph{stable}) manifold.

Proposition 4.4 of Avila-Gou\"{e}zel \cite{MR3071503} shows that the subspaces $E^{\mathrm{u,s}}(x)$ depend smoothly on $x$ and are integrable. We denote the corresponding leaves by $W^{\mathrm{u}}(x)$ and $W^{\mathrm{s}}(x)$, respectively. We call $W^{\mathrm{u}}(x)$ the \emph{unstable foliation} and $W^{\mathrm{s}}(x)$ the \emph{stable foliation}. These foliations are well defined on affine invariant submanifolds $\mathcal M$ even when we restrict to surfaces of unit area by the paragraph proceeding Definition 3.4 of \cite{SSWY}. Going forward, we assume that we are working with the foliation on unit area surfaces and by abuse of notation, we continue to denote the leaves by $W^{\mathrm{u}}(x)$.

In the literature, $W^{\mathrm{u}}(x)$ has also been the called the horospherical foliation or horizontal foliation. Our choice for the name of $W^{\mathrm{u}}(x)$ is due to the fact that this foliation acts as the unstable foliation with respect to the Teichm\"{u}ller geodesic flow. For more on this foliation see  Smillie--Smillie--Weiss--Ygouf \cite{SSWY} and Eskin-Mirzakhani-Mohammadi\cite{MR4422208}. 
 
Two points in period coordinates 
$$z=\begin{pmatrix}
    x_1&\cdots & x_n\\
    y_1&\cdots &y_n
\end{pmatrix} \text{ and }z'=\begin{pmatrix}
    x_1 '&\cdots & x_n'\\
    y_1'&\cdots &y_n'
\end{pmatrix}$$ that are in the same chart will be in the same leaf $W^{\mathrm{u}}(x)$ if they differ by some $w = \begin{pmatrix}
    w_1&\cdots & w_n\\
    0&\cdots &0
\end{pmatrix}.$
This way it is easy to see that the horocycle flow preserves the leaf and contributes one dimension to the dimension of the leaf. More succinctly, if we let $\Phi(x)=a+ib$. Then, the unstable leaf $W^{\mathrm{u}}(x)$ is locally identified with $\Phi(x)+sb+w$ for $s\in \mathbb R$ and  $w\in H^{(0)}(x)$. 

 Let $\mu_x ^{\mathrm{u,s}}$ denote the leafwise measures of $\mu_\mathcal M$ along $W^{\mathrm{u,s}}(x).$ Since we work with $\textrm{SL}(2,\mathbb R)$-invariant measures, we have $\mu_x ^{\mathrm{u,s}}$ are simply the Lebesgue measure on the leaf for a.e. $x\in\mathcal M$ (Theorem 2.1 of Eskin--Mirzakhani \cite{MR3814652}). If $B_r(x)$ is a period box centered at $x\in\mathcal M$, then $\mu_\mathcal{M}|_{B_r(x)}$ has a product structure of $\,d\text{Leb}\times \,d\mu_x ^{\mathrm{u}}\times \,d\mu_x ^{\mathrm{s}}.$

 Let $B_r ^{\mathrm{u}}(x)\subset \mathcal M$ denote the connected component of $x$ in $B_r(x)\cap W^{\mathrm{u}}(x)$. As mentioned in the introduction, the horocycle orbit of a point $x$ is contained inside of the leaf $W^{\mathrm{u}}(x)$; we let $B_r ^{\mathrm{u,0}}(x)$ denote the connected component of $x$ in $B_r(x)\cap W^{\mathrm{u}}(x)\cap H^{(0)}(x)$. 

\subsection{Smooth structure on affine manifolds}
Following Avila-Gou\"{e}zel \cite{MR3071503}  and Eskin-Mirzakhani-Mohammadi\cite{MR4422208} we endow affine invariant manifolds with a smooth structure. For a function $\varphi$ defined on an affine invariant manifold $\mathcal M$, define
$$c_k(\varphi)=\sup |D^k\varphi(x,v_1,\ldots,v_k)|$$
where the supremum is taken over $x$ in the domain of $\varphi$ and $v_1,\ldots,v_k\in T_x\mathcal M$ with AGY-norm at most 1. We define the \emph{$C^k$-norm} of $\varphi$ to be $\|\varphi\|_{C^k}=\sum_{j=0}^kc_j(\varphi)$. 

We denote the space of compactly supported functions with finite $C^k$ norm on $\mathcal M$ by $C_c^{k}(\mathcal M)$ and define $C_c^{\infty}(\mathcal M)$ similarly.

In this article we will only need the $C^1$-norm of functions and we simplify our notation by defining $\mathcal C( \varphi):=\|\varphi\|_{C^1}$.  Note that by Lemma \ref{lemmaAGY}, we have $\mathcal C (f\circ (a_tu_s))\le e^{2+2t}\mathcal C(f)$ for $t\ge 0$ and $s\in[0,1]$.

Additionally, we state a result from Eskin-Mirzakhani-Mohammadi\cite{MR4422208} that allows us to replace characteristic function with smooth approximations. 

Let $W$ denote one of $\mathcal M$ or $ \mathcal M\cap W^{\mathrm{u}}(x)$ for $x\in \mathcal M$. Let $E$ be a compact subset of $W$ and $r(E)=\inf\{r(x):x\in E\}$. For $0<\varepsilon< r(E)/10$, we define the following open neighborhood of $W$,
$$E_{\varepsilon} ^W=\{y\in W: r(y)\ge \varepsilon \text{ and }B_\varepsilon(y)\cap E\ne \emptyset\}.$$
In practice, we will take $E=\mathcal M_\eta = \{x\in\mathcal M:r(x)\ge \eta\}$.

Let $r>0$ and $L>0$. Let $\mathcal{S}_W(E,r,L)$ denote the class of Borel functions $0\le f\le 1$ supported and defined everywhere in $E$ with the following properties: For $\varepsilon\le r/(10L)$ there exists $\varphi_{+,\varepsilon}$, $\varphi_{-,\varepsilon}
\in C_c(E_\varepsilon ^ W)$ such that
\begin{enumerate}
    \item $\varphi_{-,\varepsilon}\le f \le \varphi_{+,\varepsilon}$,
    \item $\mathcal C(\varphi_{\pm,\varepsilon})\le \varepsilon^{-L}$, and
    \item $\|\varphi_{+,\varepsilon}-\varphi_{-,\varepsilon}\|_2\le \varepsilon^{1/2}\|f\|_2$.
\end{enumerate}

We need the following result from Eskin-Mirzakhani-Mohammadi\cite{MR4422208} that allows us to replace characteristic functions with smooth approximations.
\begin{lem}[Lemma 2.11, \cite{MR4422208}]
There exists some $L$ depending only on $\mathcal M$ such that for any $0<r\le r(x)$,
$$1_{B_r ^{\mathrm{u}}(x)}\in \mathcal{S}_{B_r ^{\mathrm{u}}(x) }(E,r,L)\text{ and } 1_{B_r(x)} \in \mathcal{S}_{B_r(x)} (E,r,L).$$
\end{lem}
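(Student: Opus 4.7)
The plan is to construct the approximations $\varphi_{\pm,\varepsilon}$ by mollifying indicator functions of slightly expanded and contracted period boxes, working in the period chart around $x$ where $\Phi$ is a diffeomorphism. Fix once and for all a smooth non-negative bump $\chi$ on $H^1(M,\Sigma,\mathbb C)$ with $\int\chi=1$, supported in the AGY-unit ball at $x$, and with bounded $C^1$-norm. Rescale to $\chi_\varepsilon(\cdot):=\varepsilon^{-N}\chi(\cdot/\varepsilon)$ (with $N$ the real dimension of the ambient period space), so that $\chi_\varepsilon$ is supported in the AGY ball of radius $\varepsilon$ with $\|\chi_\varepsilon\|_{C^1}\lesssim \varepsilon^{-1}$ in the AGY-induced norm. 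Define, in the period chart,
$$\varphi_{+,\varepsilon}:=1_{B_{r+2\varepsilon}(x)}*\chi_\varepsilon\quad\text{and}\quad\varphi_{-,\varepsilon}:=1_{B_{r-2\varepsilon}(x)}*\chi_\varepsilon,$$
and push these down to $\mathcal M$ via $\Phi^{-1}$. For the unstable version, the construction is analogous, but the convolution is carried out only along the unstable directions, relying on the local product structure $\mu_\mathcal M|_{B_r(x)}=d\mathrm{Leb}\times d\mu_x^{\mathrm u}\times d\mu_x^{\mathrm s}$. The hypothesis $\varepsilon\le r/(10L)$ ensures the enlarged box $B_{r+3\varepsilon}(x)$ stays inside the injectivity neighborhood of $x$, and by construction $\varphi_{-,\varepsilon}\le 1_{B_r(x)}\le\varphi_{+,\varepsilon}$ pointwise.

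The $C^1$-bound is immediate from standard properties of convolution: $\mathcal C(\varphi_{\pm,\varepsilon})\le\|\chi_\varepsilon\|_{C^1}\lesssim\varepsilon^{-1}\le\varepsilon^{-L}$ for any $L\ge 1$. For the $L^2$-bound, observe that $\varphi_{+,\varepsilon}-\varphi_{-,\varepsilon}$ is supported in the $O(\varepsilon)$-thickening of $\partial B_r(x)$. Since $B_r(x)$ is (up to bounded distortion) a product of AGY balls of radius $r$ and $\mu_\mathcal M$ has the product structure above, this thickening has measure $\lesssim(\varepsilon/r)\cdot\mathrm{vol}(B_r(x))$, so
$$\|\varphi_{+,\varepsilon}-\varphi_{-,\varepsilon}\|_2^2\lesssim (\varepsilon/r)\,\|1_{B_r(x)}\|_2^2\le\varepsilon\,\|f\|_2^2,$$
which yields the required $\varepsilon^{1/2}\|f\|_2$ bound once $L$ is chosen to absorb the implicit constants.

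The main obstacle is verifying $\mathrm{supp}(\varphi_{\pm,\varepsilon})\subset E_\varepsilon^W$, i.e.\ that $r(y)\ge\varepsilon$ for every $y$ in the support. By Lemma \ref{lemmaAGY} the AGY norm, and consequently the injectivity radius, varies in a controlled fashion along the short geodesic/horocycle paths needed to compare $y$ with a nearby base point in $E$; hence points lying within AGY distance $O(r)$ of a point of $E$ have injectivity radius bounded below in terms of $r(E)$. Combined with $\varepsilon\le r/(10L)$ and the compactness of $E$, this gives a uniform lower bound $r(y)\ge\varepsilon$ on the enlarged box, provided $L$ is taken in accordance with the comparison constants supplied by Lemma \ref{lemmaAGY}. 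This step, while essentially bookkeeping, is the one place where the degeneration of the AGY norm near the cusp of moduli space forces a careful choice of $L$, after which the same construction simultaneously produces the smooth approximations for both $1_{B_r(x)}$ and $1_{B_r^{\mathrm u}(x)}$.
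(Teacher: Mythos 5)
This lemma is quoted verbatim from Eskin--Mirzakhani--Mohammadi \cite{MR4422208} and the paper gives no proof, so there is nothing internal to compare against; your mollification construction is indeed the natural route and is very likely in the spirit of the cited source. That said, the argument as written has two real gaps.

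First, the $L^2$ estimate fails when $r$ is small. You mollify at the \emph{absolute} scale $\varepsilon$, so $\varphi_{+,\varepsilon}-\varphi_{-,\varepsilon}$ is supported in a shell of AGY-thickness $\approx\varepsilon$ around $\partial B_r(x)$. If $n$ denotes the relevant dimension, this shell has measure $\approx\varepsilon r^{n-1}$, while $\|1_{B_r(x)}\|_2^2\approx r^n$. Your chain $\|\varphi_{+,\varepsilon}-\varphi_{-,\varepsilon}\|_2^2\lesssim(\varepsilon/r)\|1_{B_r(x)}\|_2^2\le\varepsilon\|f\|_2^2$ requires the last step $(\varepsilon/r)\le\varepsilon$, i.e.\ $r\ge 1$, which is not part of the hypotheses; and the parameter $L$ cannot rescue this since it does not appear in condition (3). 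The constraint $\varepsilon\le r/(10L)$ gives $\varepsilon/r\le 1/(10L)$ but not $\varepsilon/r\le\varepsilon$. The fix is to mollify at a smaller, $r$-dependent scale, e.g.\ $m\sim\varepsilon r$ or $m\sim\varepsilon^{L'}$: then the shell has relative measure $\lesssim m/r\lesssim\varepsilon$ as required, and the $C^1$-cost $m^{-1}\lesssim(\varepsilon r)^{-1}$ is still $\le\varepsilon^{-L}$ once $L$ is taken large, because $\varepsilon\le r/(10L)$ forces $r^{-1}\le\varepsilon^{-(L-1)}$ for $L\ge 2$. This change has to be made before the ``choose $L$ to absorb constants'' sentence makes sense.

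Second, the support verification leans on the wrong lemma. Lemma~\ref{lemmaAGY} compares the AGY norm at $x$ and at $a_tu_sx$; it says nothing about how the injectivity radius $r(\cdot)$ varies as one moves within a period box in unstable or stable directions, which is what you actually need to show $\mathrm{supp}(\varphi_{\pm,\varepsilon})\subset E_\varepsilon^W$. What is needed is the comparability of the AGY norm (and hence of $r(\cdot)$) across a period box $B_r(x)$ with $r\le r(x)$ --- this is part of how the injectivity radius is constructed via the Margulis function (see the discussion around Lemma~2.6 of \cite{MR4422208}), not a consequence of Lemma~\ref{lemmaAGY}. You should also say a word about the other half of the membership condition, namely that every $y$ in the support has $B_\varepsilon(y)\cap E\ne\emptyset$, which is not addressed. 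The overall structure of your argument --- convolving indicators of slightly dilated and contracted boxes, handling the unstable case by convolving only in the leaf directions via the product structure $d\mathrm{Leb}\times d\mu_x^{\mathrm u}\times d\mu_x^{\mathrm s}$ --- is the right idea, but both points above need to be repaired for the proof to close.
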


We fix one such $L$ so that the above lemma holds and drop the dependence on $L$. Additionally, we drop the dependence on $W$ when the context is clear and similarly for $E$ when the compact set is not relevant except that it is a compact set containing $x$.

\subsection{Decay of correlations}

We need the following two results on decay of correlations.
\begin{thm}[Exponential mixing\cite{MR2264836}]
    Let $(\mathcal M,\mu_\mathcal M)$ be an affine invariant submanifold. There exists a positive constant $\kappa'=\kappa'(\mathcal M,\mu_\mathcal M)$ such that if $\phi,\psi\in C_c^\infty (\mathcal M)$, then 
    $$\left|\int\phi(a_tx)\psi(x)d\mu_\mathcal M (x)- \int \phi\,d\mu_\mathcal M\int\psi\,d\mu_\mathcal M\right|\ll \mathcal C(\phi)\mathcal C(\psi)e^{-\kappa't}$$
    where the implied constants depend on $(\mathcal M,\mu_\mathcal M)$.
\end{thm}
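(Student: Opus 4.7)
The plan is to establish this via a representation-theoretic reduction to a uniform spectral gap for the $\mathrm{SL}(2,\R)$-action on $L^2(\mathcal M,\mu_{\mathcal M})$, with the spectral gap itself obtained from a transfer-operator analysis adapted from Avila--Gou\"ezel--Yoccoz to the affine invariant submanifold setting in the spirit of Avila--Gou\"ezel \cite{MR3071503}.

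First I would pass to mean-zero functions. Writing $\phi_0 = \phi - \int \phi\, d\mu_{\mathcal M}$ and likewise $\psi_0$, the quantity to estimate is the matrix coefficient $\langle a_t \phi_0, \psi_0\rangle_{L^2(\mu_{\mathcal M})}$ of the unitary $\mathrm{SL}(2,\R)$-representation on $L^2_0(\mathcal M,\mu_{\mathcal M})$. Decomposing this representation as a direct integral of irreducibles (principal, complementary, and discrete series), matrix coefficients of $a_t$ in an irreducible $\pi$ decay like $e^{-s(\pi)t}$, where for complementary series $s(\pi)=1-\nu_\pi$ with $\nu_\pi\in(0,1)$ the complementary-series parameter, and for principal/discrete series one has $s(\pi)\ge 1$. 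A uniform lower bound $s(\pi)\ge 2\kappa'>0$ across the irreducibles appearing in the decomposition (a \emph{spectral gap}) will yield the desired rate.

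The main step is to establish this spectral gap for a general affine invariant submanifold. I would build a Rauzy--Veech style Poincar\'e section and first-return map for $a_t$, adapted to $\mathcal M$ so that the symbolic coding respects the linear equations defining $\mathcal M$ in period coordinates. The associated transfer operator acts on an anisotropic Banach space of observables; combining distortion estimates along the unstable foliation $W^{\mathrm{u}}$ with exponential tails for return times into a fixed compact piece (via Athreya-type non-divergence, to handle excursions into the cusp) gives quasi-compactness of this operator. Quasi-compactness with a simple leading eigenvalue at $1$ produces a spectral gap for the return map, which then transfers to the suspension flow by the Pollicott--Ruelle / Dolgopyat machinery used in \cite{MR2264836}, provided one establishes the requisite non-integrability / UNI condition on the stable and unstable foliations of $a_t$ on $\mathcal M$.

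Finally, I would convert the abstract $L^2$ spectral-gap bound into the quantitative estimate with $\mathcal C(\phi)\mathcal C(\psi) = \|\phi\|_{C^1}\|\psi\|_{C^1}$. The standard trick is to smear each function slightly along $\mathrm{SO}(2)$ (or in the full $\mathrm{SL}(2,\R)$-direction), write $\phi = \phi^{\mathrm{sm}} + (\phi - \phi^{\mathrm{sm}})$, and note that $C^1$-control dominates the Sobolev norms appearing in the matrix-coefficient bounds for principal and complementary series, while the error $\phi-\phi^{\mathrm{sm}}$ is $O(\mathcal C(\phi)\cdot\varepsilon)$ in $L^\infty$; optimizing $\varepsilon$ as a function of $t$ yields the stated exponential rate (with a possibly smaller $\kappa'$). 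The main obstacle is Step~3: constructing the transfer operator coding compatible with an arbitrary $\mathcal M$ and proving quasi-compactness uniformly in the thin part of moduli space; this is where the geometry of $\mathcal M$ (rather than only the full stratum treated in \cite{MR2264836}) enters, and it is precisely the content one imports from Avila--Gou\"ezel \cite{MR3071503} to justify the citation as stated.
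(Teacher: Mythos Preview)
The paper does not prove this statement at all: it is quoted as a black-box input from Avila--Gou\"ezel--Yoccoz \cite{MR2264836} (extended to general affine invariant submanifolds by Avila--Gou\"ezel \cite{MR3071503}) and used without further justification. Your proposal, by contrast, sketches how one would actually \emph{prove} exponential mixing, and your outline is broadly faithful to the strategy of those cited works: reduce to a spectral gap for the $\mathrm{SL}(2,\mathbb R)$-representation on $L^2_0$, and obtain the gap via a transfer-operator/Dolgopyat argument on a Rauzy--Veech type coding, with the Avila--Gou\"ezel extension handling the affine submanifold case.

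So there is no discrepancy in approach to flag, only a mismatch of scope: the paper treats the theorem as an imported result, while you have supplied a plausible high-level sketch of its proof. If your goal is to match the paper, the correct ``proof'' here is simply a citation. If your goal is an independent account, be aware that the genuinely hard content lies in the steps you labelled as imports (exponential tails for return times on $\mathcal M$, the UNI/non-integrability condition, and quasi-compactness on the anisotropic space), and that Avila--Gou\"ezel \cite{MR3071503} in fact bypasses some of the Rauzy--Veech combinatorics by working more directly with the flow and the $\mathrm{SL}(2,\mathbb R)$-representation theory; your sketch leans somewhat more on the original \cite{MR2264836} architecture than is strictly necessary for the general $\mathcal M$ case.
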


We equip $\mathrm{SL}(2,\mathbb R)$ with the right-invariant metric $d$ induced by the Killing form on it's Lie algebra. Recall that we choose 
$$a_t=\begin{pmatrix}
    e^t&  0\\
    0 & e^{-t}
\end{pmatrix},$$
and so $d(a_t,e)=2t$. By using the Cartan decomposition of $\mathrm{SL}(2,\mathbb R)$ and invariance of $\mathcal M$ under $\mathrm{SL}(2,\mathbb R)$, we obtain the following corollary (see also Proposition B.2 of \cite{MR2264836}).

\begin{cor}\label{Cor:Decay of Cor}
    Let $(\mathcal M,\mu_\mathcal M)$ be an affine invariant submanifold. There exists a positive constant $\kappa_\mathcal M=\kappa_\mathcal M(\mathcal M,\mu_\mathcal M)$ such that if $\phi,\psi\in C_c^\infty (\mathcal M)$, then 
    $$\left|\int\phi(gx)\psi(x)\,d\mu_\mathcal M (x)- \int \phi \,d\mu_\mathcal M\int\psi \,d\mu_\mathcal M\right|\ll \mathcal C(\phi)\mathcal C(\psi)e^{-\kappa_\mathcal M d(e,g)}$$
    where the implied constant depends on $(\mathcal M,\mu_\mathcal M)$.
\end{cor}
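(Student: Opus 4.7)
The plan is to reduce the corollary to the exponential mixing theorem for the geodesic flow by writing a general $g \in \mathrm{SL}(2,\mathbb{R})$ in its $KAK$ (Cartan) decomposition. Concretely, I would write $g = k_1 a_t k_2$ with $k_1, k_2 \in \mathrm{SO}(2)$ and $t \geq 0$, and note that since the Killing metric is bi-$\mathrm{SO}(2)$-invariant and $\mathrm{SO}(2)$ has bounded diameter, we have $d(e,g) = 2t + O(1)$. Thus it suffices to obtain the bound with the right-hand side $e^{-\kappa' t}$ up to an absolute constant.

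Next, I would use that $\mu_{\mathcal M}$ is $\mathrm{SL}(2,\mathbb{R})$-invariant (in particular $\mathrm{SO}(2)$-invariant). Setting $\tilde{\phi}(x) := \phi(k_1 x)$ and $\tilde{\psi}(x) := \psi(k_2^{-1} x)$ and changing variables $y = k_2 x$, we get
$$
\int \phi(gx)\,\psi(x)\, d\mu_{\mathcal M}(x) = \int \tilde{\phi}(a_t y)\,\tilde{\psi}(y)\, d\mu_{\mathcal M}(y).
$$
Applying the exponential mixing theorem to $\tilde{\phi}, \tilde{\psi}$ yields
$$
\left| \int \tilde{\phi}(a_t y)\,\tilde{\psi}(y)\, d\mu_{\mathcal M} - \int \tilde{\phi}\, d\mu_{\mathcal M} \int \tilde{\psi}\, d\mu_{\mathcal M} \right| \ll \mathcal{C}(\tilde{\phi})\,\mathcal{C}(\tilde{\psi})\, e^{-\kappa' t}.
$$
Another application of $\mathrm{SO}(2)$-invariance of $\mu_{\mathcal M}$ gives $\int \tilde{\phi}\, d\mu_{\mathcal M} = \int \phi\, d\mu_{\mathcal M}$ and similarly for $\psi$, so the main term matches the statement.

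It remains to control $\mathcal{C}(\tilde{\phi})$ and $\mathcal{C}(\tilde{\psi})$ in terms of $\mathcal{C}(\phi)$ and $\mathcal{C}(\psi)$. This is the only point that requires real care: unlike the $a_t$ case, where Lemma \ref{lemmaAGY} gives an explicit bound, here we are precomposing with elements of the compact group $\mathrm{SO}(2)$. Since $\mathrm{SO}(2)$ is compact and acts smoothly on $\mathcal{M}$ preserving the structure used to build the AGY norm, the operator norm of the derivative of $k\in \mathrm{SO}(2)$ on the tangent bundle (measured by AGY) is uniformly bounded by some absolute constant $A$; one can see this by writing a $k \in \mathrm{SO}(2)$ as a product of a small Cartan piece and small unipotent pieces of bounded length, or directly from the continuity of the pushforward action and compactness of $\mathrm{SO}(2)$. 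This gives $\mathcal{C}(\tilde{\phi}) \leq A\,\mathcal{C}(\phi)$ and similarly for $\tilde{\psi}$.

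Combining these steps and absorbing the constant $A^2$ and the $O(1)$ difference between $d(e,g)$ and $2t$ into the implied constant, we obtain the desired bound with $\kappa_{\mathcal M} := \kappa'/2$. The main (mild) obstacle is the uniform control of the $C^1$-norm under the $\mathrm{SO}(2)$ action; everything else is a direct bookkeeping argument from the Cartan decomposition and invariance of $\mu_{\mathcal M}$.
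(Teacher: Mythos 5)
Your proposal is correct and matches the paper's approach: the paper simply states that the corollary follows from the Cartan decomposition and $\mathrm{SL}(2,\mathbb R)$-invariance (citing Proposition B.2 of Avila--Gou\"ezel--Yoccoz), and you have filled in exactly that argument. One small sharpening of the only step you flag as delicate: there is no need to invoke compactness of $\mathrm{SO}(2)$ to get a uniform constant $A$, because the AGY norm is exactly $\mathrm{SO}(2)$-invariant --- a rotation $k$ sends saddle connections of $x$ bijectively to saddle connections of $kx$ while preserving the Euclidean length of each holonomy vector $\int_\gamma\omega$, so $\|k_*c\|_{\mathrm{AGY},kx}=\|c\|_{\mathrm{AGY},x}$ and hence $\mathcal C(\tilde\phi)=\mathcal C(\phi)$, $\mathcal C(\tilde\psi)=\mathcal C(\psi)$ on the nose.
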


\section{Effective Equidistribution of the unstable foliation}\label{unstable}
In this section we verify that the effective equidistribution of the unstable foliation always holds in the setting of Teichm\"{u}ller dynamics. We need the following result of Eskin-Mirzakhani-Mohammadi \cite{MR4422208} that relies on the exponential mixing of Teichm\"{u}ller geodesic flow Avila-Gou\"{e}zel-Yoccoz \cite{MR2264836}.

\begin{prop}[Prop 3.2, \cite{MR4422208}]\label{Prop 3.2} Let $\mathcal M$ be an affine invariant submanifold. There exists $\constK\label{K:EMM}$ depending only on $\mathcal M$ with the following property. Let $x\in \mathcal M$, $0<r\le r(x)$, and let $B_r(x)$ be  a period box centered at $x$. Let $\psi^{\mathrm{u}}\in C_c ^\infty (B_r ^{\mathrm{u}}(x))$. Then for any $\phi\in C_c^\infty (\mathcal M)$ we have
$$\left|\int_{W^{\mathrm{u}}(x)}\phi(a_t y)\psi^{\mathrm{u}} (y)\,d\mu_x ^{\mathrm{u}}(y)-\int_{\mathcal M}\phi \,d\mu_\mathcal M \int _{W^{\mathrm{u}}(x)}\psi^{\mathrm{u}} d\mu_x ^{\mathrm{u}}(y)\right |\le \mathcal C (\phi)\mathcal C (\psi^{\mathrm{u}})e^{-\ref{K:EMM} t}.$$
\end{prop}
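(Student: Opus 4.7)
The plan is a thickening-and-contraction argument that lifts exponential mixing of $a_t$ to effective equidistribution along unstable leaves, adapted to the Teichm\"uller setting via period coordinates. Fix a scale $\delta = \delta(t) > 0$ to be optimized at the end. Using the local product structure $d\mu_\mathcal M|_{B_r(x)} = d\mathrm{Leb} \otimes d\mu_x^{\mathrm u} \otimes d\mu_x^{\mathrm s}$, bundle the transverse factors (flow direction $\mathbf v$ plus stable direction) as $d\mu_x^{\mathrm{cs}} := d\mathrm{Leb} \otimes d\mu_x^{\mathrm s}$, and pick a nonnegative bump $\tau \in C_c^\infty$ supported on a $\mu_x^{\mathrm{cs}}$-ball of AGY-radius $\delta$ transverse to $W^{\mathrm u}(x)$, with $\int \tau\,d\mu_x^{\mathrm{cs}} = 1$ and $\mathcal C(\tau) \ll \delta^{-L_0}$ for some $L_0 = L_0(\mathcal M)$. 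Define
$$\Psi(y \cdot v) := \psi^{\mathrm u}(y)\tau(v)$$
on the period box and zero elsewhere, so that $\Psi \in C_c^\infty(\mathcal M)$ with $\mathcal C(\Psi) \ll \mathcal C(\psi^{\mathrm u})\delta^{-L_0}$ and $\int \Psi\,d\mu_\mathcal M = \int \psi^{\mathrm u}\,d\mu_x^{\mathrm u}$.

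Apply the Avila-Gou\"ezel-Yoccoz exponential mixing theorem to $\phi$ and $\Psi$ to get
$$\left|\int_\mathcal M \phi(a_t z)\Psi(z)\,d\mu_\mathcal M(z) - \int \phi\,d\mu_\mathcal M \int \psi^{\mathrm u}\,d\mu_x^{\mathrm u}\right| \ll \mathcal C(\phi)\mathcal C(\psi^{\mathrm u})\delta^{-L_0}e^{-\kappa' t}.$$
Then compare the thickened integral with the desired leafwise one. For $z = y \cdot v$ with $y$ on the unstable leaf and $v$ in the transverse ball, by Lemma \ref{lemmaAGY} and the action of $a_t$ on period coordinates (imaginary parts contracted by $e^{-t}$, real parts expanded by $e^t$), the AGY-displacement between $a_t z$ and $a_t y$ is $O(\delta)$ in the flow direction and $O(e^{-t}\delta)$ in the stable direction. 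A $C^1$ Taylor estimate then yields $|\phi(a_t z) - \phi(a_t y)| \ll \mathcal C(\phi)\delta$, and integrating out $v$ using $\int \tau\,d\mu_x^{\mathrm{cs}} = 1$ gives
$$\int_\mathcal M \phi(a_t z)\Psi(z)\,d\mu_\mathcal M(z) = \int_{W^{\mathrm u}(x)}\phi(a_t y)\psi^{\mathrm u}(y)\,d\mu_x^{\mathrm u}(y) + O\bigl(\mathcal C(\phi)\mathcal C(\psi^{\mathrm u})\delta\bigr).$$
Combining the two estimates bounds the left-hand side of the proposition by a constant multiple of $\mathcal C(\phi)\mathcal C(\psi^{\mathrm u})(\delta + \delta^{-L_0}e^{-\kappa' t})$; choosing $\delta = e^{-\kappa' t/(L_0+1)}$ balances the two error terms and produces the advertised bound with rate $\ref{K:EMM} = \kappa'/(L_0+1)$, which depends only on $\mathcal M$.

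The main obstacle is the $C^1$ comparison in the third step: one has to track the AGY-norm across a change of base point under $a_t$, and verify that the transverse thickening is genuinely contracted (not expanded) by time $t$. This is exactly why one thickens in the center-stable rather than the unstable direction, and why Lemma \ref{lemmaAGY} together with the splitting $T_x\mathcal M = \mathbb R\mathbf v(x) \oplus E^{\mathrm u}(x) \oplus E^{\mathrm s}(x)$ are essential here. The non-homogeneity of $\mathcal M$ is not a serious issue because $a_t$ acts linearly on period coordinates, so the required estimates reduce to linear bookkeeping in $H^1(M,\Sigma;\mathbb C)$.
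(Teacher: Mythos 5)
The paper does not reproduce a proof of this statement---it is cited verbatim from Eskin--Mirzakhani--Mohammadi \cite{MR4422208}---so there is no in-text argument to compare against. Your strategy is the standard one and, in outline, the right one: thicken $\psi^{\mathrm u}$ in the transverse (flow plus stable) factors by a smooth bump $\tau$ at scale $\delta$, feed the product $\Psi=\psi^{\mathrm u}\otimes\tau$ into the Avila--Gou\"ezel--Yoccoz exponential mixing theorem, compare the thickened integral to the leafwise one via a $C^1$ estimate using non-expansion of the transverse directions, and optimize over $\delta$. The resulting rate $\ref{K:EMM}=\kappa'/(L_0+1)$ with $L_0$ the exponent in $\mathcal C(\tau)\ll\delta^{-L_0}$ is exactly the expected shape.

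Two steps need more care than you give them. First, the transverse estimate: Lemma \ref{lemmaAGY} is only the crude two-sided bound $e^{-2-2t}\|c\|\le\|(a_t)_*c\|\le e^{2+2t}\|c\|$ and does not say the stable direction is contracted, or even non-expanded, in the AGY norm; on a surface with a near-vertical saddle connection the AGY norm of a stable displacement need not shrink at rate $e^{-t}$. What the argument actually requires is that $\mathbb R\mathbf v(x)$ is preserved isometrically by $a_t$ (immediate, since $\|\mathbf v(\cdot)\|_{\mathrm{AGY}}\equiv 1$) together with the genuine hyperbolicity statement that $E^{\mathrm s}$ is non-expanding under $a_t$ in the AGY Finsler metric; the flow factor then produces the dominant $O(\delta)$ error, and non-expansion of $E^{\mathrm s}$ suffices. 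This should be cited as such rather than derived from Lemma \ref{lemmaAGY}, and the claim of $O(e^{-t}\delta)$ for the stable part overstates what that lemma gives. Second, the thickening must sit inside a single period box: $\delta$ is bounded by the gap between $\mathrm{supp}(\psi^{\mathrm u})$ and the boundary of $B_{r(x)}(x)$, and near the cusp (or when $\psi^{\mathrm u}$ is supported close to $\partial B_r^{\mathrm u}(x)$) this gap can be arbitrarily small. Then the optimized $\delta=e^{-\kappa' t/(L_0+1)}$ is inadmissible for a nontrivial range of $t$, even though the proposition is asserted uniformly over $x\in\mathcal M$ and $0<r\le r(x)$. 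That regime has to be handled separately, for instance by a trivial bound exploiting that $\|\psi^{\mathrm u}\|_{L^1(\mu_x^{\mathrm u})}$ is small relative to $\mathcal C(\psi^{\mathrm u})$ when the available room is small, or by first flowing an intermediate time to enlarge the box. Your closing remark that the non-homogeneity of $\mathcal M$ ``is not a serious issue'' glosses over exactly these two points, which is where the translation-surface setting genuinely departs from the homogeneous case.
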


We use the above and an approximation argument to obtain effective equidistribution of the unstable foliation. 

\begin{prop}\label{Prop: EffUnstable}
 Let $\mathcal M$ be an affine invariant submanifold.  There exists $\constK\label{K:effec}$ depending only on $\mathcal M$ with the following property. Let $x\in \mathcal M_\eta$, $0<r\le r(x)$, and let $B_r(x)$ be  a period box centered at $x$. Then, there exists $L>0$, and $\constb\label{b:effec}=\ref{b:effec}(L)$ such that for any $f\in C_c^\infty(\mathcal M)$ we have 
 $$\left|\frac{1}{\mu^{\mathrm{u}} _x (B^{\mathrm{u}} _r(x))}\int_{B_r ^{\mathrm{u}}(x)}f(a_t y)\,d\mu^{\mathrm{u}} _x(y)-\int_{\mathcal M }f(x)\,d\mu_\mathcal M(x)\right|<\ref{b:effec}\mathcal C(f)\eta^{-L}e^{-\ref{K:effec} t}$$
 where $t>0$ and $0<\eta=\eta(t)\ll 1$.
    \end{prop}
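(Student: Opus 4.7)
The plan is to sandwich $1_{B_r^{\mathrm{u}}(x)}$ between smooth functions from Lemma 2.11 of \cite{MR4422208}, apply Proposition \ref{Prop 3.2} to those smoothings, and then optimize the smoothing parameter to balance two competing errors: the mixing error coming from the $C^1$-norm of the smoothing, and the $L^1$-approximation error between the smoothing and the characteristic function.

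Concretely, for $0<\varepsilon<r/(10L)$, Lemma 2.11 yields $\varphi_{\pm,\varepsilon}$ satisfying $\varphi_{-,\varepsilon}\le 1_{B_r^{\mathrm{u}}(x)}\le \varphi_{+,\varepsilon}$, $\mathcal C(\varphi_{\pm,\varepsilon})\le \varepsilon^{-L}$, and $\|\varphi_{+,\varepsilon}-\varphi_{-,\varepsilon}\|_2\le \varepsilon^{1/2}\mu_x^{\mathrm{u}}(B_r^{\mathrm{u}}(x))^{1/2}$. Applying Proposition \ref{Prop 3.2} to each $\varphi_{\pm,\varepsilon}$ gives
$$\left|\int f(a_ty)\varphi_{\pm,\varepsilon}(y)\,d\mu_x^{\mathrm{u}}(y)-\int f\,d\mu_\mathcal M\int\varphi_{\pm,\varepsilon}\,d\mu_x^{\mathrm{u}}\right|\le \mathcal C(f)\varepsilon^{-L}e^{-\ref{K:EMM} t}.$$
Since the supports of $\varphi_{\pm,\varepsilon}$ lie in a small enlargement of $B_r^{\mathrm{u}}(x)$ of comparable leafwise mass, Cauchy--Schwarz converts the $L^2$ estimate into an $L^1$ bound $\|\varphi_{+,\varepsilon}-\varphi_{-,\varepsilon}\|_1\ll \varepsilon^{1/2}\mu_x^{\mathrm{u}}(B_r^{\mathrm{u}}(x))$. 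Combining the sandwich $\varphi_{-,\varepsilon}\le 1_{B_r^{\mathrm{u}}(x)}\le \varphi_{+,\varepsilon}$ with $\|f\|_\infty\le \mathcal C(f)$ and dividing through by $\mu_x^{\mathrm{u}}(B_r^{\mathrm{u}}(x))$ yields
$$\left|\frac{1}{\mu_x^{\mathrm{u}}(B_r^{\mathrm{u}}(x))}\int f(a_ty)\,d\mu_x^{\mathrm{u}}(y)-\int f\,d\mu_\mathcal M\right|\ll \mathcal C(f)\left(\frac{\varepsilon^{-L}e^{-\ref{K:EMM} t}}{\mu_x^{\mathrm{u}}(B_r^{\mathrm{u}}(x))}+\varepsilon^{1/2}\right).$$

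The main obstacle is a quantitative lower bound on $\mu_x^{\mathrm{u}}(B_r^{\mathrm{u}}(x))$, which is the source of the $\eta^{-L}$ dependence in the conclusion. Since $\mu_x^{\mathrm{u}}$ is Lebesgue in period coordinates but $B_r^{\mathrm{u}}(x)$ is an AGY ball, Lemma \ref{lemmamax} must be invoked; the comparison between the AGY norm and the coordinate max norm degenerates by a power of $\eta$ in the cusp, and tracking this through the Lebesgue volume of a leaf of dimension $d$ gives $\mu_x^{\mathrm{u}}(B_r^{\mathrm{u}}(x))\gtrsim \eta^N$ for some explicit $N=N(\mathcal M)$. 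Choosing $\varepsilon$ so that $\varepsilon^{L+1/2}=\eta^{-N}e^{-\ref{K:EMM} t}$ balances the two error terms and produces a bound of order $\mathcal C(f)\eta^{-L'}e^{-\ref{K:effec} t}$ with $\ref{K:effec}=\ref{K:EMM}/(2L+1)$ and $L'=N/(2L+1)$. The admissibility condition $\varepsilon<r/(10L)$ then requires $\ref{K:EMM} t\gg N|\log\eta|$, which is precisely the origin of the relation $\eta=\eta(t)\ll 1$ alluded to after the statement of Theorem \ref{main}.
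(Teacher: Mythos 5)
Your proposal follows the same route as the paper's proof: sandwich $1_{B_r^{\mathrm{u}}(x)}$ with the smooth approximations $\varphi_{\pm,\varepsilon}$ from Lemma 2.11, apply Proposition \ref{Prop 3.2} to the smoothings, control the remaining difference via the $L^2$ bound together with Cauchy--Schwarz, and then optimize the smoothing scale. So the skeleton of the argument is identical. Where you depart is in the choice of $\varepsilon$ and in the treatment of the normalization factor $\mu_x^{\mathrm{u}}(B_r^{\mathrm{u}}(x))$. The paper simply sets $\varepsilon = \eta/(20L)$, so the $\eta^{-L}$ in the conclusion comes from the $\mathcal C(\varphi_{\pm,\varepsilon}) \le \varepsilon^{-L}$ bound, and the paper never explicitly divides by $\mu_x^{\mathrm{u}}(B_r^{\mathrm{u}}(x))$ --- it proves the un-normalized inequality $\bigl|\int_{B_r^{\mathrm{u}}(x)} f(a_ty)\,d\mu_x^{\mathrm{u}} - \mu_x^{\mathrm{u}}(B_r^{\mathrm{u}}(x))\int f\,d\mu_{\mathcal M}\bigr| \ll \mathcal C(f)\eta^{-L}e^{-\kappa_2 t}$ and then invokes the stated conclusion. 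You instead attribute the $\eta$-loss entirely to a volume lower bound $\mu_x^{\mathrm{u}}(B_r^{\mathrm{u}}(x)) \gtrsim \eta^N$ and balance the two error terms; this is a reasonable and arguably more complete account of the normalization issue, which the paper glosses over. One thing worth flagging: your admissibility condition $\kappa_2 t \gg N|\log\eta|$ requires $\eta$ to not decay too fast in $t$, whereas the paper's stated hypothesis for the rest of the article is $\eta \le e^{-t\kappa_2/2}$, i.e., $|\log\eta| \ge t\kappa_2/2$; for $N \ge 2$ these two regimes are disjoint. So while the mechanics of your argument are sound and match the paper's, the final $\eta$--$t$ relation you extract is in the opposite direction from the one the paper fixes, and you should reconcile your optimization with the regime in which the proposition is actually applied in Theorem \ref{main}.
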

\begin{proof}
By Lemma 2.1, we can find functions that approximate $1_{B_r ^{\mathrm{u}}(x)}$ that are well behaved in that they satisfy 
\begin{enumerate}
    \item $\varphi_{-,\varepsilon}\le 1_{B_r ^{\mathrm{u}}(x)} \le \varphi_{+,\varepsilon}$,
    \item $\mathcal C(\varphi_{\pm,\varepsilon})\le \varepsilon^{-L}$, and
    \item $\|\varphi_{+,\varepsilon}-\varphi_{-,\varepsilon}\|_2\le \varepsilon^{1/2}\|1_{B_r ^{\mathrm{u}}(x)}\|_2$
\end{enumerate}
for any $\varepsilon<\eta/10L.$

Without loss of generality, we suppose that $$\int_{B_r ^{\mathrm{u}}(x)}f(a_t y)\,d\mu^{\mathrm{u}} _x(y)-\int_{\mathcal M }f(x)\,d\mu_\mathcal M(x)\cdot\mu^{\mathrm{u}} _x (B^{\mathrm{u}} _r(x))>0.$$ 
Then,
\begin{align*}
&\int_{B_r ^{\mathrm{u}}(x)}f(a_t y)\,d\mu^{\mathrm{u}} _x(y)-\int_{\mathcal M }f\,d{\mu_\mathcal M}\cdot\mu^{\mathrm{u}} _x (B^{\mathrm{u}} _r(x))\\
&\le\int_{W ^{\mathrm{u}}(x)}f(a_t y)\varphi_{+,\varepsilon}(y)\,d\mu^{\mathrm{u}} _x(y)-\int_{\mathcal M }f\,d\mu_\mathcal M\cdot\mu^{\mathrm{u}} _x (B^{\mathrm{u}} _r(x))\\
&=\int_{W ^{\mathrm{u}}(x)}f(a_t y)\varphi_{+,\varepsilon}(y)\,d\mu^{\mathrm{u}} _x(y)-\int_{\mathcal M}f \int_{W ^{\mathrm{u}}(x)}\varphi_{+,\varepsilon}+\int_{\mathcal M}f \int_{W ^{\mathrm{u}}(x)} \varphi_{+,\varepsilon}-\int_{\mathcal M }f\cdot\mu^{\mathrm{u}} _x (B^{\mathrm{u}} _r(x))\\
&= \left(\int_{W ^{\mathrm{u}}(x)}f(a_t y)\varphi_{+,\varepsilon}(y)\,d\mu^{\mathrm{u}} _x(y)-\int_{\mathcal M}f \int \varphi_{+,\varepsilon}\right)+\int_{\mathcal M}f\left( \int_{W ^{\mathrm{u}}(x)} \varphi_{+,\varepsilon}-1_{B_r ^{\mathrm{u}}(x)}\right)\\
&= \left(\int_{W ^{\mathrm{u}}(x)}f(a_t y)\varphi_{+,\varepsilon}(y)\,d\mu^{\mathrm{u}} _x(y)-\int_{\mathcal M}f \int \varphi_{+,\varepsilon}\right)+\int_{\mathcal M}f\left( \int_{W ^{\mathrm{u}}(x)} \varphi_{+,\varepsilon}-\varphi_{+,\varepsilon}\right)
\end{align*}

A similar computation shows that we can achieve an analogous lower bound. Hence, by applying Proposition \ref{Prop 3.2} on the first term and Cauchy-Schwartz on the second yields
\begin{align*}
|\int_{B_r ^{\mathrm{u}}(x)}f(a_t y)\,d\mu^{\mathrm{u}} _x(y)-\int_{\mathcal M }f\,d&\mu\cdot\mu^{\mathrm{u}} _x (B^{\mathrm{u}} _r(x))|= \mathcal C(f)\mathcal C(\varphi_{+,\varepsilon})e^{-\ref{K:EMM}t}+\|f\|_2\|\varphi_{+,\varepsilon}-\varphi_{-,\varepsilon}\|\\
&\le \mathcal C(f)\mathcal (\mathcal C(\varphi_{+,\varepsilon})+\mathcal C(\varphi_{-,\varepsilon}))e^{-\ref{K:EMM}t}+\|f\|_2\|\varphi_{+,\varepsilon}-\varphi_{-,\varepsilon}\|.
\end{align*}

Applying properties 2 and 3 of the functions from Lemma 2.1 and observing that we are on a probability space yields
$$\left|\int_{B_r ^{\mathrm{u}}(x)}f(a_t y)\,d\mu^{\mathrm{u}} _x(y)-\int_{\mathcal M }f\,d\mu\cdot\mu^{\mathrm{u}} _x (B^{\mathrm{u}} _r(x))\right|\le 2\mathcal C(f)\varepsilon^{-L}e^{-\ref{K:EMM}t}+\mathcal C (f)\varepsilon^{1/2}\mu^{\mathrm{u}} _x (B^{\mathrm{u}} _r(x)).
$$

Now choosing $\varepsilon=\eta/20L$ and only considering $\eta\le e^{-t\ref{K:EMM}/2}$,  we obtain
\begin{align*}
\left|\int_{B_r ^{\mathrm{u}}(x)}f(a_t y)\,d\mu^{\mathrm{u}} _x(y)-\int_{\mathcal M }f\,d\mu\cdot\mu^{\mathrm{u}} _x (B^{\mathrm{u}} _r(x))\right|&\le 2\mathcal C(f)(20L)^L e^{-t\ref{K:EMM}}\eta^{-L}+\mathcal C(f)\eta^{1/2}\mu^{\mathrm{u}} _x (B^{\mathrm{u}} _r(x))\\&\le2(20L)^L \mathcal  C(f) e^{-t\ref{K:EMM}}\eta^{-L}.
\end{align*}

Hence, the proof is completed by choosing $\ref{K:effec}=\ref{K:EMM}$ and $C=2(20L)^L.$

\end{proof}

 For the rest of the article, we will always assume $\eta\le e^{-t\ref{K:EMM}/2}$.

\section{Proof of main theorem}

In this section we prove Theorem \ref{main}. 
The argument is inspired by Venkatesh \cite{MR2680486}. Broadly speaking, we will thicken the measure $\rho$ to one on the full unstable foliation using a certain function $\varphi$ that will be defined in the course of the proof. The dimension condition of $\rho$ implies that the $L^\infty$-norm of the thickened integral is not too large. We take an extra average in the horocyclic direction and take the square of this thickening. We complete the proof by using the decay of correlations of $u_s$ to show terms of the square far from the diagonal are controlled and terms close to the diagonal have small measure and are negligible.
\begin{proof}
We make a number of simplifications for ease of exposition. For example, we only deal with the ball of radius 1, $B_1 ^{\mathrm{u,0}}(x)$, but the proof works with small modifications for any ball. Additionally, without loss of generality, we assume $\int_{\mathcal M} f d\mu_\mathcal{M}= 0$ and that $f$ is a Lipschitz function. Note that by definition of $\mathcal C(\cdot)$, the Lipschitz norm is dominated by $\mathcal{C}(f)$.

Furthermore, it is convenient to work with the max norm $\|\cdot\|_{\textrm{max},x}$ on $H^1(M,\Sigma,\mathbb C)$. By Lemma \ref{lemmamax}, the AGY norm and max norm are comparable on $\mathcal M_\eta$.

Let $\Phi(x)=a+ib$. 
Let $d$ denote the dimension of $\mathcal M \cap W^{\mathrm{u}}(x)\cap H^{(0)}$ and let $w_1,\ldots,w_d$ be linearly independent cohomology classes that span $\mathcal M \cap W^{\mathrm{u}}(x)\cap H^{(0)}$ and $\mathbf{w}=(w_1\ldots,w_d)$. Recall, that the unstable leaf $W^{\mathrm{u}}(x)$ is locally identified with $\Phi(x)+sb+w$ for $w\in H^{(0)}(x)$. 

By choosing $N\in\mathbb N$ so that $\frac{1}{N}\le \delta<\frac{1}{N-1}$, we suppose that $\delta=1/N$. 

We will only work with the non-negative orthant of $B_1 ^{\mathrm{u,0}}(x)$ since considering the remaining parts only result in multiplication by a fixed multiplicative constant. For a vector $\mathbf{k}\in\{0,\ldots, N-1\}^d$, define the $\delta$-ball $I_\mathbf{k}$ set of directions one can move in $W^{\mathrm{u}}(x)\cap H^{(0)}(x)$ by
$$I_\mathbf{k}=\{\Phi(x) +\mathbf{r}\cdot \mathbf{w}\in B_1 ^{\mathrm{u,0}}(x): \mathbf{r}=(r_1,\ldots,r_d), r_j\in [k_j\delta,(k_j+1)\delta), \text{ for each }j\}.$$
Define $c_\mathbf{k}=\rho(I_\mathbf{k})$. Notice that since the $\delta$-balls $(I_\mathbf{k})_{\mathbf{k} \in\{0,\ldots, N-1\}^d}$ are disjoint and their union is $B_1 ^{\mathrm{u,0}}(x)$ that $\sum_{\mathbf{k} \in\{0,\ldots, N-1\}^d}c_\mathbf{k}=1$. 

For each $\mathbf{k} \in\{0,\ldots, N-1\}^d$, let
$$B_\mathbf{k} =\left\{\Phi(x) + s b + \mathbf{r}\cdot \mathbf{w}\in B_1 ^{\mathrm{u}}(x):0\le s\le 1,r_j\in\left(k_j\delta,k_j\delta+\frac{\delta}{4}\right)\text{ for each }j \right\}$$
denote a thickening to the full unstable leaf $W^{\mathrm{u}}(x)$.
The sets $B_\mathbf{k}$ continue to be disjoint for different indices $\mathbf{k}$.

Then,
\begin{align*}
    \int_{B^{\mathrm{u,0}}(x)}\int_{s=0} ^1 f(a_tu_s y)\,ds\,d\rho(y)&=\sum_\mathbf{k}\int_{I_\mathbf{k}}\int_{s=0} ^1 f(a_t(\Phi(x)+sb+\mathbf{r}\cdot\mathbf{w}))\,ds\,d\rho(\mathbf{r})
    \\&=\sum_\mathbf{k}\int_{\mathbf{s}\in[0,\delta)^d}\int_{s=0} ^1 f(a_tu_s(\Phi(x)+(\delta\mathbf{k}+\mathbf{r})\cdot\mathbf{w}))\,ds\,d\rho(\mathbf{r}).
\end{align*}
Thus,
\begin{align*}
    &\left|\int_{B^{\mathrm{u,0}}(x)}\int_{s=0} ^1 f(a_tu_s y)\,ds\,d\rho(y) - \sum_\mathbf{k}c_{\mathbf{k}}\int_{s=0} ^1 f(a_t(\Phi(x)+sb+\delta\mathbf{k}\cdot\mathbf{w}))\,ds\right|\\
    &\le\sum_\mathbf{k}\int_{I_\mathbf{k}}\int_{s=0} ^1 \left|f(a_t(\Phi(x)+sb+\mathbf{r}\cdot\mathbf{w})) - f(a_t(\Phi(x)+sb+\delta\mathbf{k}\cdot\mathbf{w}))\right|\,ds\,d\rho(\mathbf{r})\\
    = &\sum_\mathbf{k}\int_{I_\mathbf{k}}\int_{s=0} ^1 \left|f(a_tu_s(\Phi(x)+\mathbf{r}\cdot\mathbf{w})) - f(a_tu_s(\Phi(x)+\delta\mathbf{k}\cdot\mathbf{w}))\right|\,ds\,d\rho(\mathbf{r}).
\end{align*}

We now compare the difference between $f(a_tu_s(\Phi(x)+\mathbf{r}\cdot\mathbf{w}))$ and $ f(a_tu_s(\Phi(x)+\delta\mathbf{k}\cdot\mathbf{w}))$ by recalling that we assume $f$ is a Lipschitz function and that on the domain of integration, the difference between the inputs of $f$ is given by $\|a_tu_s(\mathbf{r'}\cdot\mathbf{w})\|_{\textrm{AGY},a_tu_sx}$ where $\mathbf{r'}\in[0,\delta)^d$. Additionally, we utilize Lemma \ref{lemmaAGY}, Lemma \ref{lemmamax}, and that $t\le |\log(\delta)|/4$, to obtain
\begin{align*}
\left|f(a_tu_s(\Phi(x)+\mathbf{r}\cdot\mathbf{w})) - f(a_tu_s(\Phi(x)+\delta\mathbf{k}\cdot\mathbf{w}))\right|&\le \mathcal C(f)\|a_tu_s(\mathbf{r'}\cdot\mathbf{w})\|_{\textrm{AGY},a_tu_sx}\\
&\ll C(f) e^{2+2t} \|\mathbf{r'}\cdot\mathbf{w}\|_{\textrm{max},x}\\
&\ll C(f)\delta^{-1/2} \delta = C(f) \delta^{1/2}.
\end{align*}
Thus, we obtain 
\begin{align*}
&\left|\int_{B^{\mathrm{u,0}}(x)}\int_{s=0} ^1 f(a_tu_s y)\,ds\,d\rho(y) - \sum_\mathbf{k}c_{\mathbf{k}}\int_{s=0} ^1 f(a_t(\Phi(x)+sb+\delta\mathbf{k}\cdot\mathbf{w}))\,ds\right|\\
\\&\ll \sum_\mathbf{k}\int_{I_\mathbf{k}}\int_{s=0} ^1 \mathcal C (f)\delta^{1/2}\,ds\,d\rho(\mathbf{r})=\mathcal C (f)\delta^{1/2}\sum_{\mathbf{k}}\rho(I_k)=\mathcal C (f)\delta^{1/2}
\end{align*}
and so it suffices to understand the behavior of $f$ on the discrete points $\delta\mathbf{k}\cdot \mathbf{w}.$

Let 
$$\varphi = \sum_{\mathbf{k}} \mu_x ^{\mathrm{u}}(B_\mathbf{k})^{-1} \cdot c_\mathbf{k}\mathbb{1}_{B_\mathbf{k}}.$$

By Lemma \ref{lemmamax} and since $\rho$ is $\varepsilon$-rich at scale $\delta$, we have $$\frac{\rho (I_\mathbf{k})}{\mu_x ^\mathrm{u}(B_\mathbf{k})}\le \frac{(2g+|\Sigma|-1)^2 b_2\delta^{d-\varepsilon}}{\eta^{nd}4^{-d}\delta^{d}}\ll\eta^{-nd}\delta^{-\varepsilon}$$
where the implied constant is absolute.

Thus, we have the following pointwise bound $$|\varphi|\ll\eta^{-nd}\delta^{-\varepsilon} \sum_{\mathbf{k}}\mathbb{1}_{B_\mathbf{k}}=\eta^{-nd}\delta^{-\varepsilon} \mathbb{1}_{\cup_\mathbf{k} B_\mathbf{k}}\eta^{-nd}\delta^{-\varepsilon}\le \eta^{-nd}\delta^{-\varepsilon}$$
by the disjointness of $B_{\mathbf{k}}$. 

We have, by noting that $\,d\mu^{\mathrm{u}} _x = \,d\mathbf{r}\,ds$,
\begin{align*}
  &\left|\sum_\mathbf{k}c_{\mathbf{k}}\int_{s=0} ^1 f(a_tu_s(\Phi(x)+\delta\mathbf{k}\cdot\mathbf{w}))\,ds-\int_{W^{\mathrm{u}}(x)}\varphi(y)f(a_t y)\,d\mu^{\mathrm{u}} _x(y)\right|\le\\
  &\sum_\mathbf{k}4^d\delta^{-d}\cdot c_{\mathbf{k}} \int_{B_\mathbf{k}}\left|f(a_tu_s(\Phi(x)+\delta\mathbf{k}\cdot\mathbf{w}))-f(a_tu_s(\Phi(x)+\mathbf{r}\cdot\mathbf{w}))\right|\,d\mathbf{r}\,ds\\
    &\ll \mathcal C (f) \delta^{1/2}
\end{align*}
where on the last line we bounded the difference of functions by $\mathcal C(f)\delta^{1/2}$ and we used that $\sum_{\mathbf{k} }c_\mathbf{k}=1$.

Thus, it suffices to study the thickening $\int_{W^{\mathrm{u}}(x)}\varphi(y)f(a_t y)\,d\mu^{\mathrm{u}} _x(y)$. Now, we introduce an extra average in the horocycle direction,
$$A = \frac{1}{\tau}\int_{W^{\mathrm{u}}(x)} \int_0^\tau \varphi(y)f(a_tu_ry)\,dr\,d\mu^{\mathrm{u}} _x(y).$$

We will eventually show that $\int_{W^{\mathrm{u}}(x)}\varphi(y)f(a_t y)\,d\mu^{\mathrm{u}} _x(y)$ is comparable to $A$. To see this, notice that $\mu_x ^{\mathrm{u}}\left(u_rB_\mathbf{k}\triangle B_\mathbf{k}\right)\ll \tau\mu_x ^{\mathrm{u}}(B_\mathbf{k})$ since, locally $W^{\mathrm{u}}(x)\simeq \mathbb R^{\text{dim}(W^{\mathrm{u}}(x))}$ and the latter enjoys the Folner property. Hence,
\begin{align*}
    \left|\int_{W^{u}(x)}\varphi(y)f(a_tu_ry)\,d\mu_x ^{\mathrm{u}}(y)-\int_{W^{u}(x)}\varphi(y)f(a_ty)\,d\mu_x ^{\mathrm{u}}(y)\right|&\le \sum_{\mathbf{k}}c_{\mathbf{k}} \int_{u_rB_{\mathbf{k}}\triangle B_{\mathbf{k}}}|f(a_ty)|\,d\mu_x ^{\mathrm{u}}(y)\\
    &\ll \sum_{\mathbf{k}} c_{\mathbf{k}} \tau\mu_x ^{\mathrm{u}}(B_\mathbf{k})\|f\|_\infty\\
    &\ll \mathcal C(f) \tau.
\end{align*}
Integrating the above over $[0,\tau]$ and multiplying the above by $1/\tau$ yields
\begin{equation}\label{approx}
\left|\int_{W^{\mathrm{u}}(x)}\varphi(y)f(a_t y)\,d\mu^{\mathrm{u}} _x(y) - A\right|\ll \mathcal C(f)\tau\end{equation}
 
We choose $\tau$ to be of the form $e^{(\frac{1}{l}-2)t}$ for $l\ge 2.$ Then, by equation (\ref{approx}) and noting that $|\log(\delta)/8|\le t$ we have 
$$\left|\int_{W^{\mathrm{u}}(x)}\varphi(y)f(a_t y)\,d\mu^{\mathrm{u}} _x(y) - A\right|\ll \mathcal C(f)\delta^{1/16},$$
and, as such, we have reduced our analysis to that of $A$.

By the Cauchy-Schwarz inequality, we have
$$|A|^2\le\int_{W^{\mathrm{u}}(x)} \left(\frac{1}{\tau}\int_0 ^\tau f(a_tu_r y)\,dr\right)^2\varphi(y)\,d\mu_x ^{\mathrm{u}}(y).$$

Now that all the terms we are dealing with are non-negative, by utilizing the upper bound on $\varphi$ we deduce
\begin{align}\label{ineq: main}
|A|^2 &\ll \frac{\eta^{-nd}\delta^{-\varepsilon}}{\tau^2} \int_0 ^\tau \int_0 ^\tau\int_{B_1 ^{\mathrm{u}}(x)} \hat f_{r_1,r_2}(a_ty)\,d\mu_x ^{\mathrm{u}}(y)\,dr_1\,dr_2
\end{align}

where $\hat f_{r_1,r_2}(y)=f(a_t u_{r_1}a_{-t}y)f(a_tu_{r_2}a_{-t}y)$.

Observe that
$$\mathcal C(\hat f_{r_1,r_2})\le (e^{2t}\tau)^2\mathcal C(f)^2=e^{\frac{2}{l}t}\mathcal C(f)^2.$$

By the above and by choosing a fixed $l'$  large enough so that $l =4l'\ref{K:effec}^{-1} \ge 2$, we have
$$\mathcal C(\hat f_{r_1,r_2})\le e^{\ref{K:effec}t/2}\mathcal C(f)^2.$$

Combining this observation, Equation (\ref{ineq: main}), and Proposition \ref{Prop: EffUnstable} we deduce
\begin{align*}
\eta^{-nd}\delta ^{-\varepsilon} \left|\int_{B_1 ^{\mathrm{u}}(x)} \hat f_{r_1,r_2}(a_ty)\,d\mu_x ^{\mathrm{u}}(y)\right|&\le \eta^{-nd}\delta ^{-\varepsilon}\int_\mathcal M \hat f_{r_1,r_2}(x)\,d\mu_\mathcal M(x)\\
&+\eta^{-nd}\ref{b:effec}\delta ^{-\varepsilon}e^{-\ref{K:effec}t/2}\mathcal C(f)^2\eta^{-L}.
\end{align*}
Combining the above with Equation (\ref{ineq: main}) yields
\begin{align}\label{ineq: two terms}
    |A|^2\ll\frac{1}{\tau^2}\int_0 ^\tau \int_0 ^\tau \bigg(&\eta^{-nd}\delta ^{-\varepsilon}\int_\mathcal M \hat f_{r_1,r_2}(x)\,d\mu_\mathcal M(x)\\
&+\eta^{-nd}\ref{b:effec}\delta ^{-\varepsilon}e^{-\ref{K:effec}t/2}\mathcal C(f)^2\eta^{-L}\bigg)\,dr_1\,dr_2\nonumber.
\end{align}
Now we analyze the first term on the right of inequality (\ref{ineq: two terms}). We do this by splitting the integral over $[0,\tau]^2$ into two regions; one where $|r_1-r_2|>e^{-2t}e^{t/2l}$ and so we can take advantage of the decay of correlations (Corollary \ref{Cor:Decay of Cor}) and one where $|r_1-r_2|<e^{-2t}e^{t/2l}$ where we use that the region is of small measure. 

When $|r_1-r_2|>e^{-2t}e^{t/2l}$, then by the decay of correlations of Corollary \ref{Cor:Decay of Cor}, and observing that $d(e,u_{e^{2t}(r_1-r_2)})\ge e^{2t}|r_1-r_2|>e^{t/2l}>\frac{t}{2l}$ , we have
\begin{align*}
\int_\mathcal M \hat f_{r_1,r_2}( x)\,d\mu_\mathcal M(x) &\ll e^{-\kappa_\mathcal M e^{2t}|r_1-r_2|} \mathcal C(f)^2\\
 &\le e^{-\kappa_\mathcal M \frac{t}{2l}} \mathcal C(f)^2.
\end{align*}
Thus, after integrating over $[0,\tau]^2$ and dividing by $\tau^2$, we have the following bound on the first term on the right of (\ref{ineq: two terms}) of
$$4^d\ref{b:rich}\delta ^{-\varepsilon}\int_\mathcal M \hat f_{r_1,r_2}( x)\,d\mu_\mathcal M(x)\ll \delta^{-\varepsilon}e^{-\kappa_\mathcal M \frac{t}{2l}} \mathcal C(f)^2$$
whenever $|r_1-r_2|>e^{-2t}e^{\frac{t}{2l}}.$ 


Finally, we consider the region close to the diagonal ($|r_1-r_2|<e^{-2t}e^{t/2l}=\tau e^{-t/2l}$). Notice this region has area $2\tau^2e^{-t/2l}.$ By invariance of $\mu_\mathcal M$ and using that the $\|\cdot\|_\infty$ is dominated by $\mathcal C(\cdot)$ we obtain
$$\int_\mathcal M\hat f_{r_1,r_2}( x) \,d\mu_\mathcal M(x) =\int_\mathcal Mf(u_{e^{2t}(r_1-r_2)} x)f( x)\,d\mu_\mathcal M(x)\le 2\tau^2e^{-t/2l}\|f\|_\infty ^2\le 2\tau^2e^{-t/2l}\mathcal C(f)^2.$$
Integrating over the region $|r_1-r_2|<e^{-2t}e^{t/2l}$ and dividing everything by $\tau^2$ we get that the first term of the right side of inequality (\ref{ineq: two terms}) is smaller than
$$\ll \frac{1}{\tau^2}\mathcal C(f)^2\cdot 2\tau^2e^{-t/2l}\ll \mathcal C(f)^2\cdot e^{-t/2l}.$$
In total, we get this estimate of the right side of inequality (\ref{ineq: two terms}),
\begin{align*}
    |A|^2&\ll \eta^{-L}\mathcal C(f)^2\delta^{-\varepsilon }\left(e^{-\kappa_\mathcal Mt/2l}+e^{-t/2l}+ e^{-\ref{K:effec}t/2}\right)\\&\ll
    \eta^{-L}\mathcal C(f)^2\delta^{-\varepsilon }e^{-\kappa_\mathcal M t/2l}.
\end{align*}

We recall that we have $l=4\ell'\ref{K:effec}^{-1}$ and $e^{-8t}\le\delta$. Let $\eta=e^{-bt}$ for some $b\in(0,1)$. By choosing $$b\le\frac{\kappa_\mathcal M \ref{K:effec}}{16\ell'nd} \text{ and }  \varepsilon\le \frac{\kappa_\mathcal M \ref{K:effec}}{256\ell'}$$  we get 
$$|A|^2\ll  \mathcal C(f)^2 \eta^{-L}e^{-t\kappa_\mathcal M \ref{K:effec}/32l'}$$
and this finishes the proof.
\end{proof}



\end{document}